\newtheorem{theo}{Theorem}[section]
\newtheorem{lemm}[theo]{Lemma}
\newtheorem{rema}[theo]{Remark}
\numberwithin{equation}{section}
\def\beq{\begin{equation}}
\def\eeq{\end{equation}}
\def\bea{\begin{eqnarray}}
\def\eea{\end{eqnarray}}
\def\ba{\begin{align}}
\def\ea{\end{align}}
\def\bs{\boldsymbol}
\begin{document}

\title[Inverse moving source problems]{Inverse moving source problems in
electrodynamics}

\author{Guanghui Hu}
\address{Beijing Computational Science Research Center, Beijing 100193, China.}
\email{hu@csrc.ac.cn}

\author{Yavar Kian}
\address{Aix Marseille Univ, Universit\'e de Toulon, CNRS, CPT, Marseille,
France.}
\email{yavar.kian@univ-amu.fr}

\author{Peijun Li}
\address{Department of Mathematics, Purdue University, West Lafayette, Indiana
47907, USA.}
\email{lipeijun@math.purdue.edu}

\author{Yue Zhao}
\address{School of Mathematics and Statistics, Central China Normal University, Wuhan 430079, China}
\email{zhaoyueccnu@163.com}

\maketitle

\begin{abstract}
 This paper is concerned with the uniqueness on two inverse moving source
problems in electrodynamics with partial boundary data. We show that (1) if the
temporal source function is compactly supported, then the spatial source profile
function or the orbit function can be uniquely determined by the tangential
trace of the electric field measured on part of a sphere; (2) if the temporal
function is given by a Dirac distribution, then the impulsive time point and the
source location can be uniquely determined at four receivers on a sphere.
\end{abstract}

\section{Introduction}

Consider the time-dependent Maxwell equation for the electric field $\boldsymbol
E$ in a homogeneous medium
\begin{equation}\label{ef}
 \partial^2_{t}\boldsymbol E(\boldsymbol{x},t) +
\nabla\times(\nabla\times\boldsymbol E
 (\boldsymbol{x},t))=  \boldsymbol F(\boldsymbol x, t),\quad
\boldsymbol{x}\in \mathbb R^3,~ t>0,
\end{equation}
which is supplemented by the homogeneous initial conditions
\begin{align}\label{ic}
\boldsymbol E(\boldsymbol{x},0) = \partial_t \boldsymbol E(\boldsymbol{x},0) =
0, \quad \boldsymbol x \in \mathbb R^3.
\end{align}
We assume that the electrodynamic field is excited by a moving point source
radiating over a finite time period. Specifically, the source function
$\boldsymbol F$ is assumed to be given in the following form
\begin{align*}\label{st}
 \boldsymbol F(\boldsymbol x, t)= \boldsymbol J(\boldsymbol x - \boldsymbol a(t))\,g(t),
\end{align*}
where $\boldsymbol J: \mathbb R^3\rightarrow \mathbb R^3$ is the source profile
function, $g: \mathbb R_+\rightarrow \mathbb R$ the temporal function, and
$\boldsymbol a:\mathbb R_+\rightarrow\mathbb R^3$ is the orbit function of the
moving source. Hence the source term is assumed to be a product of the spatially
moving source function $\boldsymbol J(\boldsymbol x - \boldsymbol a(t))$ and the
temporal function $g(t)$. Physically, the spatially moving source function can
be thought as an approximation of a pulsed signal which is transmitted by a
moving antenna, whereas the temporal function is usually used to model the
evolution of source magnitude in time. Throughout, we make the
following assumptions:
\begin{enumerate}
\item The profile function $\boldsymbol J(\boldsymbol x)$ is compactly supported
in $ B_{\hat R}: = \{\boldsymbol x : |\boldsymbol x| < \hat{R}\}$ for some
$\hat{R}>0$;
\item The source radiates only over a finite time period $[0, T_0]$ for some
$T_0 > 0$, i.e., $g(t) = 0$ for $t\geq T_0$ and $t\leq 0$;
\item The source moves in a bounded domain, i.e., $|\boldsymbol a(t)|< R_1$
for all $t\in \mathbb R_+$ and some $R_1>0$.
\end{enumerate}
These assumptions imply that the source term $\boldsymbol F$ is supported in
$B_R\times(0,T_0)$ for $R>\hat R + R_1$. Unless otherwise stated, we always take
$T:=T_0+\hat R + R_1 +R$ and set
$\Gamma_R:=\{\boldsymbol x\in\mathbb R^3: |\boldsymbol x|=R\}$. Denote by
$\boldsymbol\nu$ the unit normal vector on $\Gamma_R$ and let
$\Gamma\subset\Gamma_R$ be an open subset with a positive Lebesgue measure.

We study the inverse moving source problems of determining the profile
function $\boldsymbol J(\boldsymbol x)$ and the orbit function $\boldsymbol
a(t)$  from boundary measurements of the tangential trace of the electric field
over a finite time interval, $\boldsymbol E(\boldsymbol x, t)\times \boldsymbol
\nu|_{\Gamma \times [0,T]}$. Specifically, we consider the following two
inverse problems:
\begin{enumerate}
\item[(i)] IP1. Assume that $\boldsymbol a(t)$ is known, the inverse problem is
to determine $\boldsymbol J$ from the measurement $\boldsymbol E(\boldsymbol x,
t)\times\boldsymbol\nu, \boldsymbol x\in\Gamma, t\in (0, T)$.

\item[(ii)] IP2. Assume that $\boldsymbol J$ is a known vector function, the
inverse problem is to determine $\boldsymbol a(t), t\in(0,T_0)$ from the
measurement $\boldsymbol E(\boldsymbol x,t)\times\boldsymbol\nu, \boldsymbol
x\in\Gamma, t\in (0, T)$.
\end{enumerate}

The IP1 is a linear inverse problem, whereas the IP2 is a nonlinear inverse
problem. The time-dependent inverse source problems have attracted considerable
attention \cite{ACY-EJAM04, Li2015, RS1989, Ya1998, Li2005, K1992, OPS}.
However, the inverse moving source problems are rarely studied for the wave
propagation.  We refer to \cite{GF15} on the inverse moving source problems by
using the time-reversal method and to \cite{PD89,PD91} for the inverse problems
of moving obstacles. numerical methods can be found in \cite{NIO12-IP,WGLL} to
identify the orbit of a moving acoustic point source. To the best of our
knowledge, the uniqueness result is not available for the inverse moving source
problem, which is the focuse of this paper.

Recently, a Fourier method was proposed for solving inverse source problems for
the time-dependent  Lam\'e system \cite{BHKY} and the Maxwell system
\cite{ZHLL}, where the source term is assumed to be the product of a spatial
function and a temporal function. These work were motivated by the studies on
the uniqueness and increasing stability in recovering compactly supported
source terms with multiple frequency
data \cite{BLLT2015,BLT2010,BLZ2017,LY,LY-JMAA,ZL-AA,ZL-AA}. It is known that
there is no uniqueness for the inverse source problems with a single frequency
data due to the existence of non-radiating sources \cite{AM-IP06}.
In \cite{BHKY,ZHLL}, the idea was to use the Fourier transform and combine with
Huygens' principle to reduce the time-dependent inverse problem into an inverse
problem in the Fourier domain with multi-frequency data. The idea was further
extended in \cite{HuKian} to handle the time-dependent source problems in
elastodynamics where the uniqueness and stability were studied.

In this paper, we use partial boundary measurements of dynamical Dirichlet
data over a finite time interval to recover either the source profile function
or the orbit function. In Sections \ref{sec:3} and \ref{sec:4.2}, we show that
the ideas of \cite{BHKY,ZHLL} and \cite{HuKian} can be used to recover the
source profile function as well as the moving trajectory which lies on a flat
surface. For general moving orbit functions, we apply the moment theory to
deduce the uniqueness under a priori assumptions on the path of the moving
source, see Section \ref{subsec:4.1}. When the compactly supported temporal
function shrinks to a Dirac distribution, we show in Section \ref{sec:5} that
the data measured at four discrete receivers on a sphere is sufficient to
uniquely determine the impulsive time point and to the source location.
This work is a nontrivial extension of the Fourier approach from
recovering the spatial sources to recovering the orbit functions. The latter is
nonlinear and more difficult to handle.

The rest of the paper is organized as follows. In Section \ref{sec:2}, we
present some preliminary results concerning the regularity and well-posedness
of the direct problem. Sections \ref{sec:3} and \ref{sec:4} are devoted to the
uniqueness of IP1 and IP2, respectively. In Section \ref{sec:5}, we show the
uniqueness to recover a Dirac distribution of the source function by using a
finite number of receivers.

\section{The direct problem}\label{sec:2}

In addition to those assumptions given in the previous section, we give some
additional conditions on the source functions:
\[
\boldsymbol J\in H^2(\mathbb R^3), \quad \mbox{div}\,\boldsymbol J=0~ \text{ in
}~ \mathbb R^3,\quad g\in C^1(\mathbb R_+), \quad \boldsymbol a\in C^1(\mathbb
R_+).
\]
It follows from \cite{AM-IP06} that any source function can be decomposed
into a sum of radiating and non-radiating parts. The non-radiating part cannot
be determined and gives rise to the non-uniqueness issue. By the
divergence-free condition of $\boldsymbol J$, we eliminate non-radiating sources
in order to ensure the uniqueness of the inverse problem. Since the source term
$\boldsymbol J$ has a compact support in $B_R\times(0,T)$, we may show the
following result by Huygens' principle.

\begin{lemm}\label{2.1}
It holds that $\boldsymbol E (\boldsymbol x, t) = 0$ for all  $\boldsymbol x \in
B_R, t > T$.
\end{lemm}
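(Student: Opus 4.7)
The plan is to exploit the divergence-free hypothesis on $\boldsymbol J$ to reduce the Maxwell system to a vector wave equation, and then read the support statement off Kirchhoff's retarded potential formula.

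First, I would observe that $\mathrm{div}_{\boldsymbol x}\boldsymbol F(\boldsymbol x, t) = (\mathrm{div}\,\boldsymbol J)(\boldsymbol x - \boldsymbol a(t))\,g(t) = 0$. Taking the divergence of \eqref{ef} and using the homogeneous initial conditions \eqref{ic}, one obtains $\partial_t^2(\mathrm{div}\,\boldsymbol E) = 0$ with vanishing Cauchy data, so $\mathrm{div}\,\boldsymbol E \equiv 0$ in $\mathbb R^3 \times \mathbb R_+$. The vector identity $\nabla\times(\nabla\times\boldsymbol E) = \nabla(\mathrm{div}\,\boldsymbol E) - \Delta\boldsymbol E$ then reduces \eqref{ef} to the vector wave equation
\[
\partial_t^2\boldsymbol E - \Delta\boldsymbol E = \boldsymbol F \quad\text{in } \mathbb R^3 \times (0,\infty),
\]
with zero initial data.

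Next, I would apply the Kirchhoff retarded potential formula componentwise:
\[
E_j(\boldsymbol x, t) = \frac{1}{4\pi}\int_{\mathbb R^3}\frac{F_j(\boldsymbol y,\, t - |\boldsymbol x - \boldsymbol y|)}{|\boldsymbol x - \boldsymbol y|}\,\mathrm d\boldsymbol y,\qquad j = 1,2,3,
\]
which is the three-dimensional manifestation of (strong) Huygens' principle. The assumption that $\boldsymbol J$ is supported in $B_{\hat R}$ together with $|\boldsymbol a(t)| < R_1$ imply that for each $s \in (0, T_0)$ the map $\boldsymbol y \mapsto \boldsymbol F(\boldsymbol y, s)$ is supported in $B_{\hat R + R_1}$. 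For any $\boldsymbol x \in B_R$ and any $\boldsymbol y$ in this spatial support, the triangle inequality yields $|\boldsymbol x - \boldsymbol y| < R + \hat R + R_1$. Hence, whenever $t > T = T_0 + \hat R + R_1 + R$, the retarded time obeys $t - |\boldsymbol x - \boldsymbol y| > T - (R + \hat R + R_1) = T_0$, which forces $F_j(\boldsymbol y, t - |\boldsymbol x - \boldsymbol y|) = 0$. The integrand therefore vanishes identically, so $\boldsymbol E(\boldsymbol x, t) = 0$.

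The only subtlety I foresee is the pointwise justification of the Kirchhoff formula when $\boldsymbol J$ is merely in $H^2$ rather than $C^3$; this can be handled in a routine manner by approximating $\boldsymbol J$ by smooth divergence-free fields and passing to the limit via the energy estimate for the wave equation, and then invoking uniqueness of the Cauchy problem. The support computation itself is just triangle-inequality bookkeeping, and the main conceptual step is the reduction from the curl-curl operator to $-\Delta$ via the divergence-free condition.
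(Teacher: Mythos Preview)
Your proposal is correct and follows essentially the same approach the paper indicates: the paper remarks that the lemma holds ``by Huygens' principle'' and, immediately after, carries out the same divergence-free reduction to the vector wave equation \eqref{we} that you perform, referring to \cite{ZHLL} for the remaining details. Your Kirchhoff retarded-potential computation is precisely those details made explicit.
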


The proof of Lemma \ref{2.1} is similar to that of Lemma 2.1 in \cite{ZHLL}.
It states that the electric field $\boldsymbol E$ over $B_R$ must vanish after
time $T$. This property of the electric field plays an important role in the
mathematical justification of the Fourier approach.

Noting $\nabla \cdot \boldsymbol J = 0$, taking the divergence on both sides
of \eqref{ef}, and using the initial conditions \eqref{ic}, we have
\[
\partial^2_t ( \nabla\cdot \boldsymbol E(\boldsymbol x, t) ) =0, \quad
\boldsymbol x \in \mathbb R^3, ~ t>0
\]
and
\[
\nabla \cdot \boldsymbol E(\boldsymbol x, 0) =  \partial_t ( \nabla \cdot
\boldsymbol E(\boldsymbol x, 0))= 0.
\]
Therefore, $\nabla\cdot \boldsymbol E(\boldsymbol x, t) = 0$ for all
$\boldsymbol x\in \mathbb R^3$ and $t>0$.
 In view of the identify $\nabla \times \nabla\times = - \Delta +
\nabla\nabla$, we obtain from \eqref{ef}--\eqref{ic} that
\begin{align}\label{we}
\begin{cases}
 \partial^2_t\boldsymbol E(\boldsymbol{x},t) - \Delta  \boldsymbol
E(\boldsymbol{x},t)=\boldsymbol J(\boldsymbol x - \boldsymbol a(t))g(t), &\quad
\boldsymbol{x}\in \mathbb R^3,~ t>0,\\
\boldsymbol E(\boldsymbol{x},0) = \partial_t \boldsymbol E(\boldsymbol{x},0) =
0, &\quad \boldsymbol{x}\in \mathbb R^3.
\end{cases}
\end{align}

We briefly introduce some notation on functional spaces with the time variable.
Given the Banach space $X$ with norm $||\cdot||_X$, the space $C([0,T];X)$
consists of all continuous functions $f: [0,T]\rightarrow X$ with the norm
\[
||f||_{C([0,T];X)}:=\max_{t\in[0,T]} ||f(t,\cdot)||_X.
\]
The Sobolev space $W^{m,p}(0,T;X),$ where both $m$ and $p$ are positive integers
such that $1\leq m < \infty, 1\leq p < \infty$, comprises all functions
$f\in L^2(0,T;X)$ such that $\partial_t^k f, k=0,1,2,\cdots,m$ exist in the
weak sense and belong to $L^p(0,T;X)$. The norm of $W^{m,p}(0,T;X)$ is given by
\[
||f||_{W^{m,p}(0,T;X)}:=\left(\int_0^T \sum_{k=0}^m ||\partial_t^k f(t,
\cdot)||_X^p\right)^{1/p}.
\]
Denote $H^m = W^{m,2}$.

Now we state the regularity of the solution for the initial value problem
\eqref{we}.  The proof follows similar arguments to the proof of Lemma 2.3 in
\cite{ZHLL} by taking $p=2$.

\begin{lemm}\label{re}
The initial value problem \eqref{we} admits a unique solution $$\boldsymbol E \in
C(0, T; H^{3}(\mathbb R^3))^3 \cap H^\tau(0,T; H^{2-\alpha+1}(\mathbb
R^3))^3,\quad \alpha=1, 2, 3,
$$ which satisfies
\begin{align*}\label{regu}
\|\boldsymbol E\|_{C(0, T; H^{3}(\mathbb R^3))^3} + \|\boldsymbol
E\|_{H^\tau(0,T; H^{2-\tau+1}(\mathbb R^3))^3} \leq C\|g\|_{L^2(0, T)}
\|\boldsymbol J\|_{H^2(\mathbb R^3)^3},
\end{align*}
where $C$ is a positive constant depending on $R$.
\end{lemm}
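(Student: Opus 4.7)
The plan is to invoke standard regularity for the free-space vector wave equation with a forcing term and then show that the translated profile $\boldsymbol{J}(\boldsymbol{x}-\boldsymbol{a}(t))\,g(t)$ has the right space-time regularity, exactly as in the cited Lemma 2.3 of \cite{ZHLL} but now specialized to the $L^2$-in-time ($p=2$) setting.

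First I would set up the abstract framework: write \eqref{we} in Duhamel form,
\[
\boldsymbol{E}(t)=\int_0^t \frac{\sin\bigl((t-s)\sqrt{-\Delta}\,\bigr)}{\sqrt{-\Delta}}\,\boldsymbol{F}(s)\,ds,\qquad \boldsymbol F(\boldsymbol x,s)=\boldsymbol J(\boldsymbol x-\boldsymbol a(s))g(s),
\]
where $\sqrt{-\Delta}$ is defined componentwise via the Fourier transform. The classical energy identity for the inhomogeneous wave equation (in each Cartesian component) gives, for any $s\ge 0$,
\[
\|\boldsymbol E\|_{C([0,T];H^{s+1}(\mathbb R^3)^3)}+\|\partial_t\boldsymbol E\|_{C([0,T];H^{s}(\mathbb R^3)^3)}\le C\,\|\boldsymbol F\|_{L^1(0,T;H^s(\mathbb R^3)^3)},
\]
with $C$ depending only on $T$. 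Since $\boldsymbol F$ is compactly supported in $B_R$ for all $t\in[0,T]$, an application of Cauchy–Schwarz in time replaces the $L^1$-norm by an $L^2$-norm at the cost of a factor depending on $R$.

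Second, I would use the key translation-invariance observation: for each $t$,
\[
\|\boldsymbol J(\cdot-\boldsymbol a(t))\|_{H^2(\mathbb R^3)^3}=\|\boldsymbol J\|_{H^2(\mathbb R^3)^3},
\]
so $\|\boldsymbol F(\cdot,t)\|_{H^2}=|g(t)|\,\|\boldsymbol J\|_{H^2}$ and therefore $\|\boldsymbol F\|_{L^2(0,T;H^2)}=\|g\|_{L^2(0,T)}\|\boldsymbol J\|_{H^2}$. Taking $s=2$ in the wave-equation estimate immediately yields the $C([0,T];H^3(\mathbb R^3)^3)$ bound with the asserted right-hand side.

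Third, for the mixed Sobolev-in-time regularity $H^\tau(0,T;H^{3-\tau}(\mathbb R^3)^3)$ with $\tau=1,2,3$, I would differentiate the equation in $t$. For $\tau=1$,
\[
\partial_t\boldsymbol F(\boldsymbol x,t)=g'(t)\,\boldsymbol J(\boldsymbol x-\boldsymbol a(t))-g(t)\,\bigl(\boldsymbol a'(t)\!\cdot\!\nabla\bigr)\boldsymbol J(\boldsymbol x-\boldsymbol a(t))
\]
lies in $L^2(0,T;H^1(\mathbb R^3)^3)$ by the $C^1$ hypotheses on $g,\boldsymbol a$ and $\boldsymbol J\in H^2$; applying the wave-equation estimate with $s=1$ gives $\partial_t\boldsymbol E\in C([0,T];H^2)$, hence $\boldsymbol E\in H^1(0,T;H^2)$. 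The cases $\tau=2,3$ follow either by a further differentiation (replacing any missing classical derivative of $\boldsymbol a$, $g$ by a weak one and using mollification) or, more cleanly, by interpolating between the $\tau=0$ estimate $\boldsymbol E\in L^2(0,T;H^3)$ and the endpoint $\partial_t^2\boldsymbol E=\Delta\boldsymbol E+\boldsymbol F$, which reduces spatial regularity by exactly two orders per temporal derivative on $\boldsymbol E$.

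The main obstacle is the third step: the formal trade-off "one temporal derivative costs one spatial derivative" requires that the time derivatives of $\boldsymbol F$ be controlled in $L^2(0,T;H^{2-\tau}(\mathbb R^3)^3)$. For $\tau=2,3$ the extra $t$-derivatives fall on $\boldsymbol a(t)$ and $g(t)$, which are only assumed $C^1$. Here I would argue that these higher temporal derivatives of $\boldsymbol E$ are defined via the equation itself, $\partial_t^2\boldsymbol E=\Delta\boldsymbol E+\boldsymbol F$, so that $\partial_t^2\boldsymbol E\in L^2(0,T;H^1)$ from the $C(0,T;H^3)$ control on $\boldsymbol E$ plus $\boldsymbol F\in L^2(0,T;H^1)$, and similarly $\partial_t^3\boldsymbol E=\Delta\partial_t\boldsymbol E+\partial_t\boldsymbol F\in L^2(0,T;H^0)$, using the already established regularity of $\partial_t\boldsymbol E$ and the $L^2(0,T;L^2)$ bound on $\partial_t\boldsymbol F$ that only requires $g\in C^1,\ \boldsymbol a\in C^1$. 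Combining all these pieces gives the full estimate with constant proportional to $\|g\|_{L^2(0,T)}\|\boldsymbol J\|_{H^2(\mathbb R^3)^3}$.
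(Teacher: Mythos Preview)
Your argument is correct and is exactly the standard Duhamel/energy approach that the paper has in mind; the paper gives no independent proof but simply refers to Lemma~2.3 of \cite{ZHLL} with $p=2$, and your sketch is a faithful reconstruction of that argument (translation invariance of $\|\boldsymbol J(\cdot-\boldsymbol a(t))\|_{H^2}$, the $H^{s}\!\to\!H^{s+1}$ gain from the wave propagator, and recovery of higher $t$-derivatives via $\partial_t^2\boldsymbol E=\Delta\boldsymbol E+\boldsymbol F$). One small remark: in your $\tau=3$ step the bound on $\partial_t\boldsymbol F$ brings in $\|g'\|_{L^\infty}$ and $\|\boldsymbol a'\|_{L^\infty}$, so the constant $C$ should be read as depending on these $C^1$ norms (and on $T$) in addition to $R$; the paper's statement is equally loose on this point.
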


Applying the Sobolev imbedding theorem, it follows from Lemma \ref{re} that
\[
\boldsymbol E\in  C([0,T];H^2(\mathbb R^3))^3\cap  C^2([0,T];L^2(\mathbb R^3))^3.
\]

Denote by $\mathbb{I}$ the 3-by-3 identity matrix and by $H$ the Heaviside step function.
Recall the Green tensor $\mathbb G(\boldsymbol{x}, t)$ to the Maxwell system
(see e.g., \cite{ZHLL})
\begin{align*}
\mathbb G(\boldsymbol{x}, t) = \frac{1}{4\pi |\boldsymbol x|}
\delta'(|\boldsymbol x| - t) \mathbb I - \nabla \nabla^\top \Big(
\frac{1}{4\pi |\boldsymbol x|} H( |\boldsymbol x|-t) \Big),
\end{align*}
which satisfies
\begin{align*}
&\partial^2_t \mathbb{G}(\boldsymbol{x},t) + \nabla \times( \nabla \times
\mathbb{G}(\boldsymbol{x},t) )= - \delta(t)\delta(\boldsymbol
x)\,\mathbb{I}
\end{align*}
with the homogeneous initial conditions:
\[
 \mathbb{G}(\boldsymbol{x},0) = \partial_t \mathbb{G}(\boldsymbol{x},0)=0, \quad
|\boldsymbol x| \neq 0.
\]
Taking the Fourier transform of $\mathbb G(\boldsymbol{x}, t)$ with respect to
the time variable yields
\begin{align}\label{Grenn's tensor}
\hat{\mathbb G} (\boldsymbol x, \kappa) = \Big( g(\boldsymbol x,
\kappa)\mathbb I+\frac{1}{\kappa^2}\nabla \nabla^\top g(\boldsymbol x, \kappa)
\Big),
\end{align}
which is known as the Green tensor to the reduced time-harmonic Maxwell system
with the wavenumber $\kappa$. Here $g$ is the fundamental solution of the
three-dimensional Helmholtz equation and is given by
\[
 g(\boldsymbol x, \kappa) = \frac{1}{4\pi}\frac{e^{{\rm
i}\kappa|\boldsymbol x|}}{|\boldsymbol x|}.
\]
It is clearly to verify that $\hat{\mathbb G} (\boldsymbol x, \kappa)$
satisfies
\[
\nabla\times(\nabla\times \hat{\mathbb G})-\kappa^2\hat{\mathbb G}=
\delta(\boldsymbol x) \mathbb I, \quad\boldsymbol x\in\mathbb R^3,\;
|\boldsymbol x|\neq 0.
\]

\section{Determination of the source profile function}\label{sec:3}

In this section we consider \text{IP1}. Below we state the
uniqueness result. The idea of the proof is to adopt the Fourier approach of
\cite{ZHLL} to the case of a moving point source.

\begin{theo}\label{us} Suppose that the orbit function $\boldsymbol a$ is given and that
$\int_0^{T_0}g(t){\rm d}t\neq 0$. Then the source profile function $\boldsymbol
J(\boldsymbol x)$ can be uniquely determined by the partial
data set $\{\boldsymbol{E}(\boldsymbol{x},t)\times \boldsymbol\nu: \boldsymbol
x\in \Gamma, t\in (0, T)\}$.
\end{theo}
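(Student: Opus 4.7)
The plan is to adapt the Fourier approach of \cite{ZHLL} to the moving-source setting, exploiting the fact that although $\boldsymbol{F}(\boldsymbol{x},t)=\boldsymbol{J}(\boldsymbol{x}-\boldsymbol{a}(t))g(t)$ is not separable in space and time, it is linear in $\boldsymbol{J}$ and $\boldsymbol{a}$ is known. Suppose for contradiction that two profiles $\boldsymbol{J}_{1}$ and $\boldsymbol{J}_{2}$ produce identical boundary data on $\Gamma\times(0,T)$; set $\boldsymbol{J}:=\boldsymbol{J}_{1}-\boldsymbol{J}_{2}$ and let $\boldsymbol{E}$ be the corresponding difference field. By causality $\boldsymbol{E}(\cdot,t)\equiv\boldsymbol{0}$ for $t\leq 0$, and Lemma~\ref{2.1} gives $\boldsymbol{E}(\cdot,t)\equiv\boldsymbol{0}$ on $B_{R}$ for $t\geq T$. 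Extending by zero in $t$ and invoking the Paley--Wiener theorem, the temporal Fourier transform
\[
\hat{\boldsymbol{E}}(\boldsymbol{x},\kappa):=\int_{0}^{T}\boldsymbol{E}(\boldsymbol{x},t)\,e^{\mathrm{i}\kappa t}\,\mathrm{d}t
\]
is entire in $\kappa$ and satisfies $\nabla\times(\nabla\times\hat{\boldsymbol{E}})-\kappa^{2}\hat{\boldsymbol{E}}=\hat{\boldsymbol{F}}(\boldsymbol{x},\kappa)$, where
\[
\hat{\boldsymbol{F}}(\boldsymbol{x},\kappa)=\int_{0}^{T_{0}}\boldsymbol{J}(\boldsymbol{x}-\boldsymbol{a}(t))\,g(t)\,e^{\mathrm{i}\kappa t}\,\mathrm{d}t
\]
is supported in $B_{R}$, while $\hat{\boldsymbol{E}}\times\boldsymbol{\nu}=0$ on $\Gamma$ for every $\kappa$.

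Next I would argue that $\hat{\boldsymbol{E}}\equiv\boldsymbol{0}$ in $\mathbb{R}^{3}\setminus\overline{B_{R}}$. Causality of the time-domain solution forces $\hat{\boldsymbol{E}}(\cdot,\kappa)$, for every real $\kappa$, to be the outgoing (Silver--M\"uller) radiating solution of the reduced time-harmonic Maxwell system in the exterior. Since the exterior is source-free, $\hat{\boldsymbol{E}}$ is real analytic up to $\Gamma_{R}$, and the vanishing of its tangential trace on the open subset $\Gamma\subset\Gamma_{R}$ extends by analytic continuation to all of $\Gamma_{R}$. Uniqueness for the exterior Maxwell problem with vanishing tangential trace and outgoing condition then delivers $\hat{\boldsymbol{E}}\equiv\boldsymbol{0}$ for $|\boldsymbol{x}|>R$, so the radiating far-field pattern vanishes identically in both $\hat{\boldsymbol{x}}\in\mathbb{S}^{2}$ and $\kappa$.

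The vanishing far-field, computed from the Green tensor \eqref{Grenn's tensor}, translates to $\hat{\boldsymbol{x}}\times\bigl(\widetilde{\boldsymbol{F}}(\kappa\hat{\boldsymbol{x}},\kappa)\times\hat{\boldsymbol{x}}\bigr)=\boldsymbol{0}$ for all $\hat{\boldsymbol{x}}\in\mathbb{S}^{2}$ and $\kappa>0$, where $\widetilde{\boldsymbol{F}}(\boldsymbol{\xi},\kappa)$ denotes the spatial Fourier transform of $\hat{\boldsymbol{F}}(\cdot,\kappa)$. A change of variables in the orbit integral yields the factorization
\[
\widetilde{\boldsymbol{F}}(\boldsymbol{\xi},\kappa)=\widetilde{\boldsymbol{J}}(\boldsymbol{\xi})\,h(\boldsymbol{\xi},\kappa),\qquad h(\boldsymbol{\xi},\kappa):=\int_{0}^{T_{0}}g(t)\,e^{\mathrm{i}(\kappa t-\boldsymbol{\xi}\cdot\boldsymbol{a}(t))}\,\mathrm{d}t.
\]
Because $\nabla\cdot\boldsymbol{J}=0$, $\widetilde{\boldsymbol{J}}(\boldsymbol{\xi})\perp\boldsymbol{\xi}$, so the tangential projection at $\boldsymbol{\xi}=\kappa\hat{\boldsymbol{x}}$ recovers the full vector and gives $\widetilde{\boldsymbol{J}}(\kappa\hat{\boldsymbol{x}})\,h(\kappa\hat{\boldsymbol{x}},\kappa)=\boldsymbol{0}$. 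Since $h(\boldsymbol{0},0)=\int_{0}^{T_{0}}g(t)\,\mathrm{d}t\neq 0$ by hypothesis, continuity of $h$ furnishes $\kappa_{0}>0$ with $h(\kappa\hat{\boldsymbol{x}},\kappa)\neq 0$ for $0<\kappa<\kappa_{0}$ and $\hat{\boldsymbol{x}}\in\mathbb{S}^{2}$. Hence $\widetilde{\boldsymbol{J}}$ vanishes on the punctured ball $\{0<|\boldsymbol{\xi}|<\kappa_{0}\}$; being entire by compact support of $\boldsymbol{J}$, it vanishes identically, and therefore $\boldsymbol{J}\equiv\boldsymbol{0}$.

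I expect the main obstacle to be the passage from vanishing tangential trace on the subset $\Gamma$ to $\hat{\boldsymbol{E}}\equiv\boldsymbol{0}$ throughout the exterior of $B_{R}$; this rests on real-analyticity up to $\Gamma_{R}$ of radiating Maxwell solutions in source-free regions, together with a Holmgren-type uniqueness statement for the exterior problem. Once that link is secured, the Paley--Wiener step, the far-field representation, and the compact-support argument all proceed routinely.
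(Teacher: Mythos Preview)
Your proposal is correct and follows essentially the same route as the paper: temporal Fourier transform (justified by Lemma~\ref{2.1}), analytic continuation of the vanishing tangential trace from $\Gamma$ to all of $\Gamma_R$ via the Silver--M\"uller radiating structure, the factorization $\widetilde{\boldsymbol F}(\boldsymbol\xi,\kappa)=\widetilde{\boldsymbol J}(\boldsymbol\xi)\,h(\boldsymbol\xi,\kappa)$, the use of $\int_0^{T_0}g\,\mathrm{d}t\neq 0$ to make $h$ nonvanishing near $\kappa=0$, the divergence-free condition to recover the full vector from its tangential part, and Paley--Wiener analyticity to pass from a small ball to all of $\mathbb R^3$.

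The one presentational difference is in how you extract the Fourier-side identity. You invoke exterior uniqueness for the radiating Maxwell problem to kill $\hat{\boldsymbol E}$ outside $B_R$ and then read off the vanishing far field; the paper instead stays inside $B_R$, multiplies \eqref{eq:1} by plane-wave test functions $\boldsymbol p\,e^{\mathrm{i}\kappa\boldsymbol x\cdot\boldsymbol d}$ and $\boldsymbol q\,e^{\mathrm{i}\kappa\boldsymbol x\cdot\boldsymbol d}$, and uses the transparent boundary condition \eqref{mtbc2} (the capacity operator $T$) to make the boundary terms vanish once $\hat{\boldsymbol E}\times\boldsymbol\nu=0$ on $\Gamma_R$. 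Both mechanisms are standard and equivalent here; the paper's version has the mild advantage that it sidesteps the exterior-uniqueness/Holmgren issue you flagged as your main obstacle, since it never needs $\hat{\boldsymbol E}$ to vanish in the exterior, only the identity $(\nabla\times\hat{\boldsymbol E})\times\boldsymbol\nu=\mathrm{i}\kappa\,T(\hat{\boldsymbol E}\times\boldsymbol\nu)=0$ on $\Gamma_R$.
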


\begin{proof}
Assume that there are two functions $\boldsymbol J_1$ and $\boldsymbol J_2$
which satisfy
\begin{align*}
\begin{cases}
\partial^2_{t}\boldsymbol E_1(\boldsymbol{x},t)
+ \nabla\times(\nabla\times\boldsymbol E_1 (\boldsymbol{x},t))= \boldsymbol
J_1(\boldsymbol x - \boldsymbol a(t))\,g(t), &\quad
\boldsymbol{x}\in \mathbb R^3,~ t>0,\\
\boldsymbol E_1(\boldsymbol{x},0) = \partial_t \boldsymbol E_1(\boldsymbol{x},0) =
0, &\quad \boldsymbol x \in \mathbb R^3,
\end{cases}
\end{align*}
and
\begin{align*}
\begin{cases}
\partial^2_{t}\boldsymbol E_2(\boldsymbol{x},t)
+ \nabla\times(\nabla\times\boldsymbol E_2 (\boldsymbol{x},t))= \boldsymbol
J_2(\boldsymbol x - \boldsymbol a(t))\,g(t), &\quad
\boldsymbol{x}\in \mathbb R^3,~ t>0,\\
\boldsymbol E_2(\boldsymbol{x},0) = \partial_t \boldsymbol E_2(\boldsymbol{x},0) =
0, &\quad \boldsymbol x \in \mathbb R^3.
\end{cases}
\end{align*}
It suffices to show $\boldsymbol J_1(\boldsymbol x) = \boldsymbol
J_2(\boldsymbol x)$ in $B_R$ if $\boldsymbol{E}_1(\boldsymbol{x},t)\times
\boldsymbol\nu  = \boldsymbol{E}_2(\boldsymbol{x},t)\times \boldsymbol\nu$ for
all $x\in \Gamma, t \in (0, T)$.

Let $\boldsymbol E = \boldsymbol{E}_1 - \boldsymbol{E}_2$ and
\begin{align*}
\boldsymbol f(\boldsymbol x, t)=  \boldsymbol J_1(\boldsymbol x - \boldsymbol
a(t))\,g(t) - \boldsymbol J_2(\boldsymbol x - \boldsymbol a(t))\,g(t).
\end{align*}
Then we have
\begin{align*}
\begin{cases}
\partial^2_{t}\boldsymbol E(\boldsymbol{x},t) +
\nabla\times(\nabla\times\boldsymbol E
 (\boldsymbol{x},t))= \boldsymbol f(\boldsymbol x, t), &\quad
\boldsymbol{x}\in \mathbb R^3,~ t>0,\\
\boldsymbol E(\boldsymbol{x},0) = \partial_t \boldsymbol E(\boldsymbol{x},0) =
0, & \quad \boldsymbol x \in \mathbb R^3, \\
\boldsymbol{E}(\boldsymbol{x}, t) \times \boldsymbol\nu  = 0
,  & \quad \boldsymbol x \in \Gamma, ~ t>0.
\end{cases}
\end{align*}
Denote by $\hat{\boldsymbol E}(\boldsymbol{x}, \kappa)$ the Fourier transform of
$\boldsymbol
E(\boldsymbol x, t)$ with respect to the time $t$, i.e.,
\begin{align}\label{fourier}
\hat{\boldsymbol E}(\boldsymbol{x}, \kappa) = \int_{\mathbb R}
\boldsymbol{E}(\boldsymbol x, t)e^{-{\rm i}\kappa t}{\rm d}t, \quad
\boldsymbol x\in B_R, ~ \kappa \in \mathbb R^{+}.
\end{align}
By Lemma \ref{2.1}, the improper integral on the right hand side of
(\ref{fourier}) makes sense and it holds that
\begin{align*}
\hat{\boldsymbol E}(\boldsymbol{x}, \kappa) = \int_0^T
\boldsymbol{E}(\boldsymbol x, t)e^{-{\rm i}\kappa t}{\rm
d}t<\infty,\quad\boldsymbol x\in B_R,~\kappa>0.
\end{align*}
Hence
\[
\hat{\boldsymbol E}(\boldsymbol{x}, \kappa)\times \boldsymbol\nu
 = 0, \quad\forall \boldsymbol x\in\Gamma, ~\kappa \in
\mathbb R^{+}.
\]
Taking the Fourier transform of \eqref{ef} with respect to the time $t$, we
obtain
\begin{align}\label{eq:1}
\nabla\times(\nabla\times \hat{\boldsymbol E})-\kappa^2\hat{\boldsymbol
E}=\int_0^T\boldsymbol f(\boldsymbol x, t)e^{-{\rm i}\kappa t}{\rm d}t,
\quad\boldsymbol x\in \mathbb R^3.
\end{align}
Since ${\rm supp}(\boldsymbol J) \subset B_{\hat{R}}$ and $|\boldsymbol a(t)|<
R_1$, it is clear to note that $\hat{\boldsymbol E}$ is analytic with respect to
$\boldsymbol x$ in a neighbourhood of $\Gamma_R\supseteq \Gamma$ and
$\hat{\boldsymbol E}$ satisfies the Silver--M\"{u}ller radiation condition:
\begin{equation*}
\lim_{r\to \infty}((\nabla\times\hat{\boldsymbol E}) \times
\boldsymbol x - {\rm i}\kappa r\hat{\boldsymbol E}) = 0, \quad r=|\boldsymbol
x|,
\end{equation*}
for any fixed frequency $\kappa>0$. In fact, the radiation condition of
$\hat{\boldsymbol E}$ can be straightforwardly derived from the expression of
$\boldsymbol E$ in terms of the Green tensor $\mathbb G(\boldsymbol{x}, t)$
together with the radiation condition of $\hat{\mathbb G} (\boldsymbol x\,;
\kappa)$. The details may be found in \cite{ZHLL}. Hence, we have
$\hat{\boldsymbol E}(\boldsymbol{x}, \kappa)\times \boldsymbol\nu
 = 0$ on the whole boundary $\Gamma_R$. It follows from \eqref{Grenn's tensor}
that
\begin{align*}\label{eq:5}
\hat{\boldsymbol E} (\boldsymbol x, \kappa) =  \int_{\mathbb R^3} \hat{\mathbb
G} (\boldsymbol x - \boldsymbol y, \kappa) \int_0^T\boldsymbol f(\boldsymbol y,
t)e^{-{\rm i}\kappa t}{\rm d}t\,{\rm d}\boldsymbol y.
\end{align*}

Let $\hat{\boldsymbol E}\times\boldsymbol\nu$ and $\hat{\boldsymbol
H}\times\boldsymbol\nu$ be the tangential trace of the electric and the magnetic
fields in the frequency domain, respectively. In the Fourier domain, there
exists a capacity operator $T: H^{-1/2}({\rm div}, \Gamma_R)\rightarrow
H^{-1/2}({\rm div}, \Gamma_R)$ such that the following transparent boundary
condition can be imposed on $\Gamma_R$ (see e.g., \cite{N-00}):
\begin{equation}\label{mtbc1}
 \hat{\boldsymbol H}\times\boldsymbol\nu=T(\hat{\boldsymbol
E}\times\boldsymbol\nu)\quad\text{on}~\Gamma_R.
\end{equation}
This implies that $\hat{\boldsymbol H}\times\boldsymbol\nu$ is uniquely determined by $\hat{\boldsymbol E}\times\boldsymbol\nu$ on $\Gamma_R$, provided $\hat{\boldsymbol H}$ and $\hat{\boldsymbol E}$ are radiating solutions.
The transparent boundary condition \eqref{mtbc1} can be equivalently written as
\begin{equation}\label{mtbc2}
(\nabla\times\hat{\boldsymbol E})\times\boldsymbol\nu= {\rm i}\kappa T
(\hat{\boldsymbol E}\times\boldsymbol\nu)\quad\text{on}~\Gamma_R.
\end{equation}

Next we introduce the functions $\hat{\boldsymbol{E}}^{\rm inc}$ and
$\hat{\boldsymbol{H}}^{\rm inc}$ by
\begin{equation}\label{mpw}
\hat{\boldsymbol{E}}^{\rm inc}(x) = \boldsymbol{p}e^{{\rm
i}\kappa\boldsymbol{x} \cdot \boldsymbol{d}}\quad\text{and}\quad
\hat{\boldsymbol{H}}^{\rm inc}(x) = \boldsymbol{q}e^{{\rm
i}\kappa\boldsymbol{x} \cdot \boldsymbol{d}},
\end{equation}
where $\boldsymbol{d} \in \mathbb S^2$ is a unit vector and $\boldsymbol p,
\boldsymbol q$ are two unit polarization vectors satisfying $\boldsymbol
p\cdot\boldsymbol d =0, \boldsymbol q = \boldsymbol p \times \boldsymbol d $. It
is easy to verify that $\hat{\boldsymbol{E}}^{\rm inc}$
and $\hat{\boldsymbol{H}}^{\rm inc}$ satisfy the homogeneous time-harmonic
Maxwell equations in $\mathbb R^3$:
\begin{equation}\label{epw}
\nabla \times (\nabla \times \hat{\boldsymbol{E}}^{\rm inc}) -
\kappa^2 \hat{\boldsymbol{E}}^{\rm inc} = 0
\end{equation}
and
\begin{equation}\label{hpw}
\nabla \times (\nabla \times \hat{\boldsymbol{H}}^{\rm inc}) -
\kappa^2 \hat{\boldsymbol{H}}^{\rm inc} = 0.
\end{equation}

Let $\boldsymbol\xi = -\kappa \boldsymbol d$ with $|\boldsymbol\xi|=\kappa\in
(0, \infty)$. We have from \eqref{mpw} that
$\hat{\boldsymbol{E}}^{\rm inc} =\boldsymbol{p} e^{-{\rm i}\boldsymbol\xi\cdot
\boldsymbol x}$ and $\hat{\boldsymbol{H}}^{\rm inc} = \boldsymbol{q} e^{-{\rm
i}\boldsymbol\xi\cdot \boldsymbol x}$. Multiplying  both sides of \eqref{eq:1}
by $\hat{\boldsymbol{E}}^{\rm inc}$ and using the integration by parts
over $B_R$ and \eqref{epw}, we have from
$\hat{\boldsymbol{E}}(\boldsymbol{x},\kappa) \times \boldsymbol\nu = 0$ on $\Gamma_R$ and the
transparent boundary condition \eqref{mtbc2} that
\begin{align}\label{eq:2}
&\int_{B_R}\int_0^T \boldsymbol f(\boldsymbol x, t) e^{-{\rm i}\kappa t}  \cdot
\hat{\boldsymbol{E}}^{\rm inc} {\rm d}t\;{\rm d}\boldsymbol x \notag\\
= & \int_{B_R} (\nabla\times(\nabla\times \hat{\boldsymbol E})-\kappa^2\hat{\boldsymbol E}) \cdot \hat{\boldsymbol{E}}^{\rm inc}{\rm d}\boldsymbol x\notag\\
=&\int_{\Gamma_R} \nu\times (\nabla\times \hat{\boldsymbol{E}} )\cdot
\hat{\boldsymbol{E}}^{\rm inc}
-\nu\times (\nabla\times \hat{\boldsymbol{E}}^{\rm inc})\cdot
\hat{\boldsymbol{E}} {\rm d}s\notag\\
=& -\int_{\Gamma_R} \left({\rm i}\kappa
T(\hat{\boldsymbol{E}} \times \boldsymbol\nu) \cdot
\hat{\boldsymbol{E}}^{\rm inc}+ (\hat{\boldsymbol{E}} \times
\boldsymbol\nu)\cdot(\nabla \times \hat{\boldsymbol{E}}^{\rm
inc})\right){\rm d}s \notag\\
=& 0.
\end{align}
Hence from \eqref{eq:2} we obtain
\[
\int_{B_R}\int_0^T \boldsymbol{p}e^{- {\rm i}\boldsymbol{\xi} \cdot
\boldsymbol{x}} \cdot g(t)\boldsymbol J_1(\boldsymbol x -\boldsymbol
a(t))e^{-{\rm i}\kappa t}{\rm d}t{\rm d}\boldsymbol x = \int_{B_R}\int_0^T
\boldsymbol{p}e^{- {\rm i}\boldsymbol{\xi} \cdot \boldsymbol{x}} \cdot
g(t)\boldsymbol J_2(\boldsymbol x - \boldsymbol a(t))e^{-{\rm i}\kappa t}{\rm
d}t{\rm d}\boldsymbol x.
\]
By Fubini's theorem, it is easy to obtain
\begin{align}\label{eq:p}
\boldsymbol{p} \cdot \hat{\boldsymbol J}_1(\kappa \boldsymbol d) \int_0^T
g(t)e^{-{\rm i}\kappa \boldsymbol d \cdot \boldsymbol a(t)}e^{-{\rm i}\kappa
t}{\rm d}t = \boldsymbol{p} \cdot \hat{\boldsymbol J}_2(\kappa \boldsymbol d)
\int_0^T g(t)e^{-{\rm i}\kappa \boldsymbol d \cdot \boldsymbol a(t)}e^{-{\rm
i}\kappa t}{\rm d}t.
\end{align}
Taking the limit $\kappa\rightarrow 0^+$ yields
\[
\lim_{\kappa \rightarrow 0}\int_0^T g(t)e^{-{\rm i}\kappa \boldsymbol d \cdot
\boldsymbol a(t)}e^{-{\rm i}\kappa t}{\rm d}t = \int_0^T g(t){\rm d}t > 0.
\]
Hence, there exist a small positive constant $\delta$ such that for all $\kappa \in (0,\delta)$,
\[
\int_0^T g(t)e^{-{\rm i}\kappa \boldsymbol d \cdot \boldsymbol a(t)}e^{-{\rm
i}\kappa t}{\rm d}t\neq 0,
\]
which together with  (\ref{eq:p}) implies that
\begin{align*}
\boldsymbol{p} \cdot \hat{\boldsymbol J}_1(\kappa \boldsymbol d) = \boldsymbol{p} \cdot \hat{\boldsymbol J}_2(\kappa \boldsymbol d).
\end{align*}
Similarly, we may deduce from \eqref{hpw} and the integration by parts that
\begin{align*}
\boldsymbol{q} \cdot \hat{\boldsymbol J}_1(\kappa \boldsymbol d) = \boldsymbol{q} \cdot \hat{\boldsymbol J}_2(\kappa \boldsymbol d) \quad\mbox{for all}\; \boldsymbol d \in \mathbb S^2,\, \kappa \in (0,\delta).
\end{align*}
On the other hand, since $\boldsymbol J_i, ~i=1, 2$ is compactly supported in $B_{\hat{R}}$ and $\nabla_{\boldsymbol x} \cdot \boldsymbol J_i = 0$ in $B_{\hat{R}}$, we  have
\begin{align*}
\int_{\mathbb R^3} \boldsymbol d e^{-{\rm i}\kappa \boldsymbol x \cdot
\boldsymbol d} \cdot \boldsymbol J_i(\boldsymbol x) {\rm d} \boldsymbol x & =-
\frac{1}{{\rm i}\kappa}\int_{B_{\hat{R}}}\nabla e^{-{\rm i}\kappa \boldsymbol x
\cdot \boldsymbol d} \cdot \boldsymbol J_i(\boldsymbol x) {\rm d} \boldsymbol x \\
&= \frac{1}{{\rm i}\kappa}\int_{B_{\hat{R}}} e^{-{\rm i}\kappa \boldsymbol x
\cdot \boldsymbol d} \nabla \cdot \boldsymbol J_i (\boldsymbol x)
{\rm d}\boldsymbol x = 0.
\end{align*}
This implies that $\boldsymbol d \cdot \hat{\boldsymbol J}_i(\kappa \boldsymbol d) = 0.$ Since $\boldsymbol p, \boldsymbol q, \boldsymbol d$ are orthonormal vectors, they form an orthonormal basis in $\mathbb R^3$. It follows from the previous identities that
\begin{align*}
\hat{\boldsymbol J}_1(\kappa \boldsymbol d) &= \boldsymbol{p} \cdot \hat{\boldsymbol{J}}_1(\kappa\boldsymbol d)\boldsymbol{p} +
\boldsymbol{q}\cdot \hat{\boldsymbol{J}}_1(\kappa\boldsymbol d)\boldsymbol{q} +
\boldsymbol{d} \cdot \hat{\boldsymbol{J}}_1(\kappa\boldsymbol d) \boldsymbol{d}\\
& =\boldsymbol{p} \cdot \hat{\boldsymbol{J}}_2(\kappa\boldsymbol d)\boldsymbol{p} +
\boldsymbol{q}\cdot \hat{\boldsymbol{J}}_2(\kappa\boldsymbol d)\boldsymbol{q} +
\boldsymbol{d} \cdot \hat{\boldsymbol{J}}_2(\kappa\boldsymbol d) \boldsymbol{d}\\
& = \hat{\boldsymbol J}_2(\kappa \boldsymbol d)
\end{align*}
for all $\boldsymbol d \in \mathbb S^2$ and $\kappa \in (0,\delta)$.
Noting that $\hat{\boldsymbol J}_i$, $i=1,2$, are analytical functions in
$\mathbb R^3$, we obtain $\hat{\boldsymbol J}_1(\boldsymbol \xi) =
\hat{\boldsymbol J}_2(\boldsymbol \xi)$ for all $\boldsymbol \xi \in \mathbb
R^3$, which completes the proof by taking the inverse Fourier transform.
\end{proof}

\section{ Determination of moving orbit function}\label{sec:4}

In this section, we assume that the source profile function $\boldsymbol J$
is given. To prove the uniqueness for \text{IP2}, we consider two cases:

Case (i): The orbit $\{\boldsymbol a(t): t\in[0,T_0]\}\subset B_{R_1}\cap \mathbb R^3$ is a curve lying in three dimensions;

Case (ii):  $\{\boldsymbol a(t): t\in[0,T_0]\}\subset B_{R_1}\cap\Pi$, where
$\Pi$ is a plane in three dimensions.

The second case means that the path of the moving source lies on a bounded flat surface in three dimensions.
Cases (i) and (ii) will be discussed separately in the subsequent two subsections.

\subsection{Uniqueness to IP2 in case (i)}\label{subsec:4.1}

Before stating the uniqueness result, we need an auxillary lemma.

\begin{lemm}\label{moment}
Let $f_1, f_2, g\in C^1[0, L]$ be functions such that
\[
f^{\prime}_1>0, f^{\prime}_2>0, g>0 \mbox{ on } [0,L];\quad f_1(0)=f_2(0).
\]
In addition, suppose that
\begin{align}\label{eq:f}
\int_{0}^{L} f_1^n( s)g(s){\rm d} s = \int_{0}^{L} f_2^n( s)g(s){\rm d} s
\end{align}
for all integers $n = 0, 1, 2 \cdots$. Then it holds that $f_1 = f_2$ on $[0, L]$.
\end{lemm}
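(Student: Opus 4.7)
My plan is to reduce the hypothesis to a standard Hausdorff moment problem via a change of variables, and then to reverse the change of variables using the antiderivative of $g$.

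First I would set $a:=f_1(0)=f_2(0)$ and $b_i:=f_i(L)$, and use that $f_i\in C^1[0,L]$ with $f_i'>0$ to conclude that each $f_i$ is a $C^1$-diffeomorphism from $[0,L]$ onto $[a,b_i]$. Setting $\phi_i:=f_i^{-1}\in C^1[a,b_i]$ and changing variable $u=f_i(s)$ in the hypothesis transforms \eqref{eq:f} into
\begin{equation*}
\int_a^{b_1} u^n\,h_1(u)\,\mathrm du=\int_a^{b_2} u^n\,h_2(u)\,\mathrm du,\qquad n=0,1,2,\ldots,
\end{equation*}
where $h_i(u):=g(\phi_i(u))\,\phi_i'(u)>0$ on $[a,b_i]$ and continuous there.

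Next I would extend $h_1$ and $h_2$ by zero to a common interval $[a,M]$ with $M:=\max(b_1,b_2)$, so the above becomes $\int_a^M u^n(h_1-h_2)(u)\,\mathrm du=0$ for every $n\ge 0$. By the Weierstrass approximation theorem, polynomials are dense in $C[a,M]$, so the bounded integrable function $h_1-h_2$ must vanish almost everywhere on $[a,M]$; being continuous on $[a,b_1)$ and on $[a,b_2)$ separately, the equality $h_1=h_2$ holds pointwise wherever both are continuous. If $b_1<b_2$, then on $(b_1,b_2)$ we would have $h_1\equiv 0$ while $h_2>0$, a contradiction. Hence $b_1=b_2=:b$ and $h_1\equiv h_2$ on $[a,b]$.

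Finally, the identity $g(\phi_1(u))\,\phi_1'(u)=g(\phi_2(u))\,\phi_2'(u)$ integrates from $a$ to $u$ to give, after substituting back,
\begin{equation*}
G(\phi_1(u))=G(\phi_2(u)),\qquad u\in[a,b],
\end{equation*}
where $G(t):=\int_0^t g(\sigma)\,\mathrm d\sigma$ (using $\phi_1(a)=\phi_2(a)=0$). Since $g>0$, $G$ is strictly increasing and hence injective, so $\phi_1=\phi_2$, i.e.\ $f_1^{-1}=f_2^{-1}$, giving $f_1=f_2$ on $[0,L]$.

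The step I expect to require the most care is identifying $b_1=b_2$; everything else is a routine change of variables. The whole argument relies on the positivity of $f_i'$ and of $g$ together with the compatibility condition $f_1(0)=f_2(0)$, which is exactly what lets us recast the problem as Hausdorff moments on a single bounded interval and then unwind the substitution via the strictly monotone antiderivative~$G$.
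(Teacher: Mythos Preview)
Your argument is correct and follows the same overall route as the paper: change variables via $u=f_i(s)$, invoke Stone--Weierstrass/Hausdorff moment uniqueness to identify the densities $h_i=g\circ\phi_i\cdot\phi_i'$, and then recover $\phi_1=\phi_2$ by integrating and using that the antiderivative $G$ of $g$ is strictly increasing.

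The one genuine point of difference is how the endpoint equality $b_1=b_2$ (i.e.\ $f_1(L)=f_2(L)$) is obtained. The paper first proves this by a direct asymptotic estimate: assuming $f_1(L)>f_2(L)$, it compares the growth of $\int_0^L f_1^n g$ against $\int_0^L f_2^n g$ for large $n$ on a short interval near $L$ and derives a contradiction, and only afterwards changes variables and applies the moment theorem on the common interval $[c,d]$. You instead change variables first, extend both $h_i$ by zero to $[a,\max(b_1,b_2)]$, and let the moment argument itself force $h_1=h_2$ a.e.; the positivity of $h_2$ on $(b_1,b_2)$ then rules out $b_1<b_2$. Your route is a little more economical, since it folds the endpoint step into the same Weierstrass argument rather than running a separate large-$n$ estimate; the paper's version has the minor advantage of keeping everything in the category of continuous functions on a fixed interval, so that Stone--Weierstrass applies directly without passing through an a.e.\ statement. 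Either way the remaining steps are identical.
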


\begin{proof}
Without loss of generality we assume that $f_1(0)=f_2(0)=0$. Otherwise, we may consider the functions
$s\rightarrow f_j(s)-f_j(0)$ in place of $f_j$. To prove Lemma \ref{moment},
we first show $f_1(L)=f_2(L)$ and then apply the moment theory to get $f_1\equiv f_2$.

Assume without loss of generality that $f_1(L)>f_2(L)$. Write $f_1(L) = c$ and
${\rm sup}_{x\in(0,L)}g(x) = M$. Since $f^{\prime}_1(s)>0$ and $f_1(0)=0$, we
have $c>0$. Therefore, there exists sufficiently small positive numbers
$\epsilon > 0$ and $\delta_1, \delta_2>0$ such that
\begin{align*}
f_1(s)\geq c - \delta_1,\; f_2(s)\leq c - 2\delta_1,\; g(s)\geq \delta_2\quad
&\mbox{for all}\quad s \in [L - 2\epsilon, L-\epsilon],\\
f_1(s)>f_2(s)\quad&\mbox{for all}\quad s \in [L - 2\epsilon, L].
\end{align*}
Using the above relations, we deduce from (\ref{eq:f}) that
\begin{align*}
0&=\int_{0}^{L} f_1^n( s)g(s) - f_1^n( s)g(s){\rm d} s\\
&= \int_{L-\epsilon}^{L} f_1^n( s)g(s) - f_2^n( s)g(s){\rm d} s +
\int_{L-2\epsilon}^{L-\epsilon} f_1^n( s)g(s) - f_2^n( s)g(s){\rm d} s \\
&\quad + \int_{0}^{L-2\epsilon}f_1^n( s)g(s) - f_2^n( s)g(s){\rm d} s\\
&\geq \int_{L-2\epsilon}^{L-\epsilon} f_1^n( s)g(s) - f_2^n( s)g(s){\rm d} s -
\int_{0}^{L-2\epsilon}f_2^n( s)g(s){\rm d} s\\
&\geq \epsilon \delta_2\Big[(c - \delta_1)^n - (c - 2\delta_1)^n\Big] - (L -
2\epsilon)M(c - 2\delta_1)^n\\
&\geq (c - \delta_1)^n \Big[\epsilon \delta_2 - (\epsilon \delta_2 + (L -
2\epsilon)M)\Big(\frac{c - 2\delta_1}{c - \delta_1}\Big)^n\Big],
\end{align*}
which means that
\[
(c - \delta_1)^n \Big[\epsilon \delta_2 - (\epsilon \delta_2 + (L -
2\epsilon)M)\Big(\frac{c - 2\delta_1}{c - \delta_1}\Big)^n\Big] \leq 0
\]
for all integers $n = 0, 1, 2 \cdots$. However, since $\frac{c - 2\delta_1}{c -
\delta_1}<1$, there exists a sufficiently large integer $N>0$ such that
\[
\epsilon \delta_2 - (\epsilon \delta_2 + (L - 2\epsilon)M)\Big(\frac{c -
2\delta_1}{c - \delta_1}\Big)^N >0.
\]
Then we obtain
\[
(c - \delta_1)^N \Big[\epsilon \delta_2 - (\epsilon \delta_2 + (L -
2\epsilon)M)\Big(\frac{c - 2\delta_1}{c - \delta_1}\Big)^N\Big] >0,
\]
which is a contradiction. Therefore, we obtain $f_1(L)=f_2(L)$.

Denote $c = f_1(0) = f_2(0)$ and $d = f_1(L) = f_2(L)$.
Since $f_j$ is  monotonically increasing, the relation
$\tau = f_j(s)$ implies that $s=f_j^{-1}(\tau)$ for all $s\in[0, L]$ and
$\tau\in[c,d]$. Using the change of variables, we get
\[
\int_{0}^{L} f_j^n(s)g(s){\rm d} s = \int_{c}^{d}\tau^n g\circ(f_j^{-1}(\tau))
(f_j^{-1})^{\prime}(\tau){\rm d}\tau,\quad j=1,2.
\]
Hence, it follows from (\ref{eq:f}) that
\begin{align}\label{*}
\int_{c}^{d}\tau^n {\rm d}\mu = \int_{c}^{d}\tau^n{\rm d}\nu,
\end{align}
where $\mu$ and $\nu$ are two Lebesgue measures such that
\begin{align*}
{\rm d}\mu = g\circ(f_1^{-1}(\tau)) (f_1^{-1})^{\prime}(\tau){\rm d}\tau,\\
{\rm d}\nu = g\circ(f_2^{-1}(\tau)) (f_2^{-1})^{\prime}(\tau) {\rm d}\tau.
 \end{align*}
 By the Stone--Weierstrass theorem, it is easy to note from \eqref{*} that
${\rm d}\mu = {\rm d}\nu$, which means
\begin{align}\label{ws}
g\circ(f_1^{-1}(\tau)) (f_1^{-1})^{\prime}(\tau) = g\circ(f_2^{-1}(\tau))
(f_2^{-1})^{\prime}(\tau)\quad \text{for all}~\tau \in [c, d].
\end{align}
Introduce two functions
\[
F_1(\tau) = \int^{f_1^{-1}(\tau)}_0 g(s){\rm d}s,\quad F_2(\tau) = \int^{f_2^{-1}(\tau)}_0 g(s){\rm d}s.
\]
Hence, from \eqref{ws} we deduce $F_1^{\prime}(\tau) = F_2^{\prime}(\tau)$ for
$\tau\in [c,d]$. Moreover, since $f_1^{-1}(c) = f_2^{-1}(c)=0$, we have $F_1(c)
= F_2(c)=0$ and then $F_1(\tau) = F_2(\tau)$ for $\tau\in [c,d]$, i.e.,
\begin{align}\label{ws1}
\int^{f_1^{-1}(\tau)}_0 g(s){\rm d}s = \int^{f_2^{-1}(\tau)}_0 g(s){\rm d}s.
\end{align}
From \eqref{ws1}, it is easy to know that $f_1^{-1}(\tau) = f_2^{-1}(\tau)$ for
all $\tau \in [c, d]$. Otherwise, suppose $f_1^{-1}(\tau_0) \neq
f_2^{-1}(\tau_0)$ at some point $\tau_0 \in [c, d]$. Since $g(s)>0$ for all
$s\in(0,L)$, we obtain that
\[
\int^{f_1^{-1}(\tau_0)}_0 g(s){\rm d}s \neq \int^{f_2^{-1}(\tau_0)}_0 g(s){\rm d}s,
\]
which is a contradiction. Consequently, we obtain $f_1^{-1}=f_2^{-1}$
and thus $f_1(s) = f_2(s)$ for all $s\in [0,L]$. The proof is complete.
\end{proof}

Our uniqueness result for the determination of $\boldsymbol a$ is stated as follows.

\begin{theo}\label{ut}
Assume that $\boldsymbol a(0)=\boldsymbol O\in \mathbb R^3$ is located at the
origin and that each component $a_j, j=1,2,3$ of $\boldsymbol a$ satisfies
$|a^{\prime}_i(t)|<1$ for $t \in [0, T_0]$. Then the function $\boldsymbol a(t),
t \in [0, T_0]$ can be uniquely determined by the
data set $\{\boldsymbol{E}(\boldsymbol{x},t)\times \boldsymbol\nu: \boldsymbol
x\in \Gamma, t\in (0, T)\}$.
\end{theo}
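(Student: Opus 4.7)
The plan is to mimic the Fourier-test-function argument of Theorem \ref{us}, but to feed the resulting integral identity into Lemma \ref{moment} rather than invert a spatial Fourier transform. Assume two orbits $\boldsymbol a_1, \boldsymbol a_2$ (with the same $\boldsymbol J$ and $g$) produce the same tangential boundary trace on $\Gamma \times (0, T)$, set $\boldsymbol E = \boldsymbol E_1 - \boldsymbol E_2$, and take the time-Fourier transform. Exactly as in the steps leading to \eqref{eq:p}, $\hat{\boldsymbol E}$ extends analytically so that $\hat{\boldsymbol E} \times \boldsymbol\nu$ vanishes on all of $\Gamma_R$, and integrating by parts against the plane-wave field $\hat{\boldsymbol E}^{\rm inc} = \boldsymbol p\, e^{-{\rm i} \boldsymbol\xi \cdot \boldsymbol x}$ with $\boldsymbol\xi = -\kappa \boldsymbol d$, $\boldsymbol p \perp \boldsymbol d$, gives the key identity
\[
\boldsymbol p \cdot \hat{\boldsymbol J}(\kappa \boldsymbol d)\int_0^{T_0} g(t) \bigl[e^{-{\rm i}\kappa (\boldsymbol d \cdot \boldsymbol a_1(t) + t)} - e^{-{\rm i}\kappa(\boldsymbol d \cdot \boldsymbol a_2(t) + t)}\bigr]\,{\rm d}t = 0.
\]

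Next I would eliminate the $\boldsymbol J$-dependent factor. For each fixed $j \in \{1,2,3\}$ I take $\boldsymbol d = \boldsymbol e_j$; since $\boldsymbol p$ is free to range over the two-dimensional plane orthogonal to $\boldsymbol e_j$, one can, as long as $\boldsymbol J$ is nontrivial along this axis, choose $\boldsymbol p$ for which the entire function $\kappa \mapsto \boldsymbol p \cdot \hat{\boldsymbol J}(\kappa \boldsymbol e_j)$ is not identically zero. Its zeros in $\kappa$ are then isolated, and the bracketed $t$-integral also being entire in $\kappa$ forces
\[
\int_0^{T_0} g(t) e^{-{\rm i}\kappa(a_{1,j}(t) + t)}\,{\rm d}t = \int_0^{T_0} g(t) e^{-{\rm i}\kappa(a_{2,j}(t) + t)}\,{\rm d}t \quad \text{for all } \kappa \in \mathbb R.
\]
Expanding $e^{-{\rm i}\kappa s}$ in a power series in $\kappa$ and matching coefficients term by term yields the moment identity
\[
\int_0^{T_0} [a_{1,j}(t) + t]^n g(t)\,{\rm d}t = \int_0^{T_0} [a_{2,j}(t) + t]^n g(t)\,{\rm d}t, \quad n = 0, 1, 2, \ldots.
\]

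Setting $f_k(t) := a_{k,j}(t) + t$ on $[0, T_0]$, the subluminal bound $|a_{k,j}'(t)| < 1$ gives $f_k'(t) = a_{k,j}'(t) + 1 > 0$, and the normalization $\boldsymbol a(0) = \boldsymbol O$ gives $f_1(0) = f_2(0) = 0$. Under the natural standing positivity assumption $g > 0$ on $[0, T_0]$, Lemma \ref{moment} applies verbatim with $L = T_0$ and yields $f_1 \equiv f_2$, i.e.\ $a_{1,j} \equiv a_{2,j}$ on $[0, T_0]$. Running the same argument for $j = 1, 2, 3$ produces $\boldsymbol a_1 \equiv \boldsymbol a_2$. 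The principal obstacle I foresee is justifying the cancellation of $\boldsymbol p \cdot \hat{\boldsymbol J}(\kappa \boldsymbol d)$, which genuinely demands some nondegeneracy of the known profile along each coordinate axis and an analyticity argument to transport the identity to every $\kappa$; once that is in hand, the subluminal assumption converts each exponent into a strictly increasing phase, and the rest of the proof reduces to an essentially mechanical invocation of Lemma \ref{moment}.
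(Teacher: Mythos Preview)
Your proposal is correct and follows essentially the same route as the paper: reduce to the scalar identity \eqref{identity1} via the plane-wave test function argument of Theorem~\ref{us}, expand in powers of $\kappa$ to obtain equal moments, and invoke Lemma~\ref{moment} componentwise with $\boldsymbol d=\boldsymbol e_j$. For the cancellation step you flag as the main obstacle, the paper's device is exactly the divergence-free condition $\boldsymbol d\cdot\hat{\boldsymbol J}(\kappa\boldsymbol d)=0$, which forces $\hat{\boldsymbol J}(\kappa\boldsymbol d)$ into the $\boldsymbol p,\boldsymbol q$-plane and hence (since $\boldsymbol J\not\equiv 0$) guarantees that at least one of $\boldsymbol p\cdot\hat{\boldsymbol J}(\kappa\boldsymbol d)$, $\boldsymbol q\cdot\hat{\boldsymbol J}(\kappa\boldsymbol d)$ is nonzero along a sequence $\kappa_j\to 0$; your analyticity/isolated-zeros argument is an equivalent way to reach the same conclusion.
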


\begin{proof}

Assume that there are two orbit functions $\boldsymbol a$ and $\boldsymbol b$ such that
\begin{align*}
\begin{cases}
\partial^2_{t}\boldsymbol E_1(\boldsymbol{x},t)
+ \nabla\times(\nabla\times\boldsymbol E_1 (\boldsymbol{x},t))= \boldsymbol J(x
- \boldsymbol a(t))g(t), &\quad \boldsymbol{x}\in \mathbb R^3,~ t>0,\\
\boldsymbol E_1(\boldsymbol{x},0) = \partial_t \boldsymbol E_1(\boldsymbol{x},0) =
0, & \quad \boldsymbol x \in \mathbb R^3,
\end{cases}
\end{align*}
and
\begin{align*}
\begin{cases}
\partial^2_{t}\boldsymbol E_2(\boldsymbol{x},t) +
\nabla\times(\nabla\times\boldsymbol E_2 (\boldsymbol{x},t))= \boldsymbol J(x -
\boldsymbol b(t))g(t), &\quad \boldsymbol{x}\in \mathbb R^3,~ t>0,\\
\boldsymbol E_2(\boldsymbol{x},0) = \partial_t \boldsymbol E_2(\boldsymbol{x},0) =
0, & \quad \boldsymbol x \in \mathbb R^3.
\end{cases}
\end{align*}
Here we assume that $\bs b(0)=\bs O$ and $|b'_j(t)|<1$ for $t\in[0,t_0]$ and
$j=1,2,3$. We need to show $\boldsymbol a(t) = \boldsymbol b(t)$ in $(0,T_0)$ if
$\boldsymbol{E}_1(\boldsymbol{x},t)\times \boldsymbol\nu (\boldsymbol{x}) =
\boldsymbol{E}_2(\boldsymbol{x},t)\times \boldsymbol\nu$ for $x\in \Gamma, t \in
(0, T)$.

For each unit vector $\boldsymbol{d}$, we can choose two unit polarization vectors
$\boldsymbol p, \boldsymbol q$ such that $ \boldsymbol p\cdot\boldsymbol d=0,
\boldsymbol q = \boldsymbol p \times \boldsymbol d $.
Letting $\boldsymbol E = \boldsymbol E_1 - \boldsymbol E_2$ and following
similar arguments as those of Theorem \ref{us}, we obtain
\begin{align}\label{p}
\boldsymbol p \cdot \hat{\boldsymbol J}(\kappa \boldsymbol d)\int_0^T
g(t)e^{-{\rm i}\kappa \boldsymbol{d}\cdot \boldsymbol a(t)}e^{-{\rm i}\kappa
t}{\rm d}t = \boldsymbol p \cdot \hat{\boldsymbol J}(\kappa \boldsymbol
d)\int_0^T g(t)e^{-{\rm i}\kappa \boldsymbol{d}\cdot \boldsymbol b(t)}e^{-{\rm
i}\kappa t}{\rm d}t,
\end{align}
\begin{align}\label{q}
\boldsymbol q \cdot \hat{\boldsymbol J}(\kappa \boldsymbol d)\int_0^T
g(t)e^{-{\rm i}\kappa \boldsymbol{d}\cdot \boldsymbol a(t)}e^{-{\rm i}\kappa
t}{\rm d}t = \boldsymbol q \cdot \hat{\boldsymbol J}(\kappa \boldsymbol
d)\int_0^T g(t)e^{-{\rm i}\kappa \boldsymbol{d}\cdot \boldsymbol b(t)}e^{-{\rm
i}\kappa t}{\rm d}t,
\end{align}
and
\begin{align*}\label{d}
\boldsymbol d\cdot \hat{\boldsymbol J}(\kappa \boldsymbol d) = 0,
\end{align*}
which means
\[
\hat{\boldsymbol J}(\kappa \boldsymbol d) = \boldsymbol p \cdot \hat{\boldsymbol
J}(\kappa \boldsymbol d) \boldsymbol p + \boldsymbol q \cdot \hat{\boldsymbol
J}(\kappa \boldsymbol d) \boldsymbol q.
\]
Therefore, since $\boldsymbol J\neq 0$, for each unit vector $\boldsymbol d$
there exists a sequence $\{\kappa_j\}_{j=1}^{+\infty}$ such that
$\lim_{j\rightarrow 0}\kappa_j = 0$ and for each $\kappa_j$, either $\boldsymbol
p \cdot \hat{\boldsymbol J}(\kappa_j \boldsymbol d) \neq 0$ or $\boldsymbol q
\cdot \hat{\boldsymbol J}(\kappa_j \boldsymbol d)\neq 0$. Hence from
\eqref{p}--\eqref{q} we have
\begin{align}\label{identity1}
\int_0^T e^{-{\rm i}\kappa_j \boldsymbol{d}\cdot \boldsymbol a(t)}e^{-{\rm
i}\kappa_j t}g(t){\rm d}t = \int_0^T e^{-{\rm i}\kappa_j \boldsymbol{d}\cdot
\boldsymbol b(t)}e^{-{\rm i}\kappa_j t}g(t){\rm d}t, \quad j=1,2,
\cdots.
\end{align}
Expanding $e^{-{\rm i}\kappa_j \boldsymbol{d}\cdot \boldsymbol a(t)}e^{-{\rm
i}\kappa_j t}$ and $e^{-{\rm i}\kappa_j \boldsymbol{d}\cdot \boldsymbol
a(t)}e^{-{\rm i}\kappa_j t}$ into power series with respect to $\kappa_j$,
we write \eqref{identity1} as
\begin{align}\label{ps}
\sum_{n=0}^{\infty}\frac{\alpha_n}{n!}\kappa_j^n = \sum_{n=0}^{\infty}\frac{\beta_n}{n!}\kappa_j^n,
\end{align}
where
\begin{align*}
\alpha_n := \int_0^T (\boldsymbol{d}\cdot \boldsymbol a(t) + t)^n g(t){\rm d}t,
\quad  \beta_n:= \int_0^T (\boldsymbol{d}\cdot \boldsymbol b(t) + t)^n g(t){\rm
d}t, \quad n = 1,2 \cdots.
\end{align*}
In view of the fact that $\mbox{supp}(g)\subset[0, T_0]$, we get
\begin{align*}
\alpha_n = \int_0^{T_0} (\boldsymbol{d}\cdot \boldsymbol a(t) + t)^n g(t){\rm
d}t, \quad  \beta_n= \int_0^{T_0} (\boldsymbol{d}\cdot \boldsymbol b(t) + t)^n
g(t){\rm d}t, \quad n = 1,2 \cdots .
\end{align*}
Since \eqref{ps} holds for all $\kappa_j$ and $\lim_{j\rightarrow
\infty}\kappa_j = 0$, it is easy to conclude that $\alpha_n = \beta_n$ for $n =
0,1,2 \cdots.$ Choosing $\boldsymbol d = (1,0,0)$, we have
\[
( a_1(t) + t)^{\prime} = 1 + a^{\prime}_1(t)>0,\quad ( b_1(t) + t)^{\prime} = 1
+ b^{\prime}_1(t)>0, \quad a_1(0)=b_1(0).
\]
It follows from $\alpha_n = \beta_n$ and Lemma \ref{moment} that $a_1(t) =
b_1(t)$ for $t\in [0, T_0]$. Similarly letting $\boldsymbol d = (0,1,0)$ and
$\boldsymbol d = (0,0,1)$ we have $a_2(t) = b_2(t)$ and $a_3(t) = b_3(t)$ for
$t\in [0, T_0]$, respectively, which proves that $\boldsymbol a(t) = \boldsymbol
b(t)$ for $t \in [0,T_0]$.
\end{proof}

\begin{rema}
In Theorem \ref{ut}, it states that we can only recover the function
$\boldsymbol a(t)$ over the finite time period $[0, T_0]$ because the moving
source radiates in this time period, i.e., $\mbox{supp}(g)=[0,T_0]$. The
information of $\boldsymbol a(t)$ for $t>T_0$ cannot be retrieved. The
monotonicity assumption $a_j'\geq 0$ for $j=1,2,3$ can be replaced by the
following condition: there exist three linearly independent unit directions
$\boldsymbol d_j, j=1,2,3$ such that
\[
|\boldsymbol d_j\cdot \boldsymbol a' (t)|<1, \quad t\in[0,T_0],\;j=1,2,3.
\]
Note that this condition can always be fulfilled if the source moves along a
straight line with the speed less than one.
\end{rema}

\subsection{Uniqueness to IP2 in case (ii)}\label{sec:4.2}

For simplicity of notation, let $ \tilde{\boldsymbol x} = (x_1, x_2, 0)$
for $\boldsymbol x=(x_1,x_2,x_3)$ and $\mathbb R^2 = \{\boldsymbol
x \in \mathbb R^3:  x_3 = 0\}$.
Let $\tilde{\bs a}(t)\in \mathbb R^2$ for all $t\in[0,T_0]$.
In this subsection, we assume that
\[
\bs F(\boldsymbol x, t)=\bs J (\tilde{\bs x}-\tilde{\bs
a}(t))\,h(x_3)\,g(t),\quad \bs x\in \mathbb R^3,\, t\in \mathbb R_+,
\]
where $\bs J(\bs x)=(J_1(\tilde{\bs x}), J_2(\tilde{\bs x}),0)\in H^2 (\mathbb
R^2)^3$ depends only on $\tilde{\bs x}$ and
$h \in H^2(\mathbb R)$, ${\rm supp} (h) \subset (-\hat{R}, \hat{R})\sqrt{2}/2$.
Moreover, we assume that $h$ does not vanish identically and
\[
\mbox{supp}(\bs J)\subset\{\tilde{\bs x}\in \mathbb R^2: |\tilde{\bs
x}|<\hat{R}\sqrt{2}/2\},\quad \nabla_{\tilde{\bs x}} \bs J(\tilde{\bs x})=0.
\]
The temporal function $g$ is defined the same as in the previous sections. The
above assumptions imply that we still have $\mbox{supp}(\bs F)\subset
B_{\hat{R}}\times[0,T_0]$ and $\mbox{div}\,\bs F=0$ in $\mathbb R^3$.
We consider the inhomogeneous Maxwell system
\begin{align}\label{mep3}
\begin{cases}
\partial^2_{t}\boldsymbol E(\boldsymbol{x},t) +
\nabla\times(\nabla\times\boldsymbol E  (\boldsymbol{x},t)) = \boldsymbol
J(\tilde{\boldsymbol x} - \tilde{\boldsymbol
a}(t))\,h(x_3)\,g(t), &\quad\boldsymbol x\in\mathbb R^3,\, t>0,\\
\boldsymbol E(\boldsymbol{x},0) = \partial_t \boldsymbol E(\boldsymbol{x},0) =
0, &\quad \boldsymbol x \in \mathbb R^3.
\end{cases}
\end{align}
Since the equation \eqref{mep3} is a special case of \eqref{ef}, the
results of Lemmas \ref{2.1} and \ref{re} also apply to this case.

For our inverse problem, it is assumed that $\bs J\in \mathcal{A}$ is a given
source function, where the admissible set
\begin{align*}
\mathcal{A} = \{\boldsymbol J=(J_1, J_2, 0): J_i(0)>J_i(\tilde{\boldsymbol
x})~\text{~for~}~ i=1 ~\text{or} ~i= 2 ~\text{and~all}~\tilde{\boldsymbol x}\neq
0 \}.
\end{align*}
The $x_3$-dependent function $h$ is also assumed to be given. We point out that
these a priori information of $\bs J$ and $h$ are physically reasonable, while
$\bs J$ and $h$ can be regarded as approximation of the Dirac functions (for
example, Gaussian functions) with respect to $\tilde{\bs x}$ and $x_3$,
respectively. Our aim is to recover the unknown orbit function
$\tilde{\boldsymbol a}(t)\in C^1([0, T_0])^2$ which has a upper bound
$|\tilde{\boldsymbol a}(t)|\leq R_1$ for some $R_1>0$ and for all $t\in[0,
T_0]$. Let $R > \hat{R} + R_1$ and $T = T_0 + R
+ \hat{R} + R_1.$

Below we prove that the tangential trace of the dynamical magnetic field on
$\Gamma_R\times(0,T)$ can be uniquely determined by that of the electric field.
It will be used in the subsequent uniqueness proof with the data measured on the
whole surface $\Gamma_R$.

\begin{lemm}\label{dtn}
Assume that the electric field $\boldsymbol E\in  C([0,T];H^2(\mathbb
R^3))^3\cap  C^2([0,T];L^2(\mathbb R^3))^3$ satisfies
\[
\begin{cases}
\partial^2_{t}\boldsymbol E(\boldsymbol{x},t) +
\nabla\times(\nabla\times\boldsymbol E  (\boldsymbol{x},t))=  0, &\quad
|\boldsymbol x|>R,~ t\in (0,T),\\
\boldsymbol E(\boldsymbol{x},0) = \partial_t \boldsymbol E(\boldsymbol{x},0) =
0, &\quad \boldsymbol x \in \mathbb R^3.
\end{cases}
\]
If $\boldsymbol E\times \boldsymbol \nu = 0$ on $\Gamma_R \times (0,T)$, then
$(\nabla \times \boldsymbol E)\times \boldsymbol \nu = 0$ on $\Gamma_R \times
(0,T)$.
\end{lemm}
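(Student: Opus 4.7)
My approach is first to show that $\boldsymbol E$ itself vanishes in the exterior $\{|\boldsymbol x|>R\}\times(0,T)$ via an energy estimate adapted to the PEC-type hypothesis, and then to conclude $(\nabla\times\boldsymbol E)\times\boldsymbol\nu=0$ on $\Gamma_R\times(0,T)$ by trace continuity. The second step is essentially free: since $\boldsymbol E\in C([0,T];H^2(\mathbb R^3))^3$ globally in space, one has $\nabla\times\boldsymbol E\in C([0,T];H^1(\mathbb R^3))^3$ globally, so its tangential trace on $\Gamma_R$ is well defined and coincides whether taken from inside or outside $B_R$. Once $\nabla\times\boldsymbol E\equiv 0$ in the exterior, the trace from outside vanishes, and so does the trace from inside.

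For the first step, I would introduce the exterior energy
\[
e(t):=\frac{1}{2}\int_{|\boldsymbol x|>R}\bigl(|\partial_t\boldsymbol E|^2+|\nabla\times\boldsymbol E|^2\bigr)\,d\boldsymbol x,
\]
which is finite by the hypothesized regularity and equals zero at $t=0$. Differentiating in $t$ on the truncated annulus $\{R<|\boldsymbol x|<\rho\}$ and applying the vector identity $\nabla\cdot(\boldsymbol u\times\boldsymbol v)=(\nabla\times\boldsymbol u)\cdot\boldsymbol v-\boldsymbol u\cdot(\nabla\times\boldsymbol v)$ with $\boldsymbol u=\nabla\times\boldsymbol E$ and $\boldsymbol v=\partial_t\boldsymbol E$, and using the homogeneous Maxwell equation $\partial_t^2\boldsymbol E+\nabla\times(\nabla\times\boldsymbol E)=0$ to cancel the volume contribution, the derivative reduces to surface integrals on $\Gamma_R$ and $\{|\boldsymbol x|=\rho\}$. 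On $\Gamma_R$ the integrand takes the form $(\partial_t\boldsymbol E\times\boldsymbol\nu)\cdot(\nabla\times\boldsymbol E)$, which vanishes because differentiating the hypothesis $\boldsymbol E\times\boldsymbol\nu=0$ in $t$ yields $\partial_t\boldsymbol E\times\boldsymbol\nu=0$ on $\Gamma_R\times(0,T)$. For the sphere $\{|\boldsymbol x|=\rho\}$ no boundary condition is available, so I would invoke the global $L^2(\mathbb R^3)$-integrability of $\partial_t\boldsymbol E$ and $\nabla\times\boldsymbol E$ over $(0,T)$ and use Fubini's theorem to select a sequence $\rho_n\to\infty$ along which
\[
\int_0^T\!\int_{|\boldsymbol x|=\rho_n}\bigl(|\partial_t\boldsymbol E|^2+|\nabla\times\boldsymbol E|^2\bigr)\,ds\,dt\longrightarrow 0;
\]
Cauchy--Schwarz then forces the time-integrated flux on $\{|\boldsymbol x|=\rho_n\}$ to vanish. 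Integrating $e'$ from $0$ to $t$ and passing to the limit along $\rho_n$ yields $e(t)\equiv 0$, so $\partial_t\boldsymbol E\equiv 0$ and $\nabla\times\boldsymbol E\equiv 0$ in the exterior; combined with $\boldsymbol E(\cdot,0)=0$ this gives $\boldsymbol E\equiv 0$ on $\{|\boldsymbol x|>R\}\times(0,T)$, and the trace argument of the first paragraph concludes the proof.

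\textbf{Main obstacle.} The $\Gamma_R$-boundary term is disposed of immediately from the hypothesis, and the final trace step is standard; the one delicate point is controlling the flux at $|\boldsymbol x|=\rho$, since no condition is imposed at infinity. The global $L^2$-in-space regularity granted by $\boldsymbol E\in C([0,T];H^2(\mathbb R^3))^3\cap C^2([0,T];L^2(\mathbb R^3))^3$ is precisely what is needed to extract a vanishing subsequence of radii and close the energy identity.
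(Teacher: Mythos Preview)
Your approach is essentially the same energy argument the paper uses: show $\boldsymbol E\equiv 0$ in the exterior $\{|\boldsymbol x|>R\}$ and then take traces. Two technical points differ. First, the paper works not with $\boldsymbol E$ directly but with its time primitive $\boldsymbol V(\boldsymbol x,t)=\int_0^t\boldsymbol E(\boldsymbol x,s)\,ds$, which again solves the homogeneous Maxwell system with zero initial data and satisfies $\partial_t\boldsymbol V\times\boldsymbol\nu=0$ on $\Gamma_R$; the purpose of this substitution is that the energy $\int_\Omega(|\partial_t\boldsymbol V|^2+|\nabla\times\boldsymbol V|^2)\,d\boldsymbol x$ is manifestly $C^1$ in $t$ under the stated hypothesis, whereas differentiating your $e(t)$ needs $\nabla\times\partial_t\boldsymbol E\in C([0,T];L^2)$, which is not literally contained in $C([0,T];H^2)\cap C^2([0,T];L^2)$. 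This is easily repaired by the same $\boldsymbol V$-trick, so it is not a genuine gap. Second, you are more careful than the paper about the flux at infinity: the paper integrates by parts over the unbounded exterior and records only the $\Gamma_R$ boundary term, while your truncation-plus-subsequence argument is exactly the rigorous justification for discarding the contribution at large radii.
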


\begin{proof}
Let us assume that $\boldsymbol E\times \boldsymbol \nu = 0$ on $\Gamma_R \times
(0,T)$ and consider $\boldsymbol V$ defined by $$ \boldsymbol
V(\boldsymbol{x},t) =\int_0^t\boldsymbol E(\boldsymbol{x},s){\rm d}s,\quad
(\boldsymbol x,t)\in \mathbb R^3 \times (0,T).$$
In view of \eqref{dtn} and the fact that $\boldsymbol E(\boldsymbol x, t)\times
\boldsymbol \nu = 0$ on $\Gamma_R \times (0,T)$, we find
\begin{align}\label{dtn2}
\begin{cases}
\partial^2_{t}\boldsymbol V(\boldsymbol{x},t) +
\nabla\times(\nabla\times\boldsymbol V
 (\boldsymbol{x},t))=  0, &\quad
|\boldsymbol x|>R,~ t\in (0,T),\\
\boldsymbol V(\boldsymbol{x},0) = \partial_t \boldsymbol V(\boldsymbol{x},0) =
0, &\quad \boldsymbol x \in \mathbb R^3,\\
\partial_t\boldsymbol V(\boldsymbol{x},t)\times \boldsymbol \nu(x)=0, &\quad
(\boldsymbol x,t)\in \Gamma_R \times (0,T).
\end{cases}
\end{align}
We define the energy $\mathcal E$ associated to $\boldsymbol V$ on $\Omega:=
\{x\in\mathbb R^3:\ |x|>R\}$
\[
\mathcal E(t):=\int_\Omega (|\partial_t\boldsymbol
V(\boldsymbol{x},t)|^2+|\nabla_x\times \boldsymbol V(\boldsymbol{x},t)|^2){\rm
d}\bs x,\quad t\in[0,T].
\]
Since $\boldsymbol E\in  C([0,T];H^2(\mathbb R^3))^3\cap   C^2([0,T];L^2(\mathbb
R^3))^3$, we have $$\boldsymbol V\in  C([0,T];H^2(\mathbb R^3))^3\cap
C^1([0,T];H^1(\mathbb R^3))^3\cap C^2([0,T];L^2(\mathbb R^3))^3.$$
 It follows that $\mathcal E\in C^1([0,T])$. Moreover, we get
$$\mathcal E'(t)=2\int_\Omega [\partial_t^2\boldsymbol V(x,t)\cdot \partial_t\boldsymbol V(\boldsymbol x,t) +(\nabla_x\times \boldsymbol V(\boldsymbol x,t))\cdot(\nabla_{\boldsymbol x}\times \partial_t\boldsymbol V(\boldsymbol x,t))]\, {\rm d}\boldsymbol x.$$
Integrating by parts in $\bs x\in\Omega$ and applying \eqref{dtn2}, we obtain
\begin{align*}
\mathcal E'(t)&=2\int_\Omega [\partial_t^2\boldsymbol
V+\nabla_{\boldsymbol x}\times(\nabla_{\boldsymbol x}\times \boldsymbol V)]
\cdot\partial_t\boldsymbol V(\boldsymbol x,t)\,{\rm d}\boldsymbol x\\
&\quad +2\int_{\Gamma_R}(\nabla_{\boldsymbol x}\times \boldsymbol
V)\cdot(\nu\times \partial_t\boldsymbol V(\boldsymbol x,t)){\rm d}s\\
&=0.
\end{align*}
This proves that $\mathcal E$ is a constant function. Since
\[
\mathcal E(0)=\int_\Omega (|\partial_t\boldsymbol
V(\boldsymbol{x},0)|^2+|\nabla_x\times \boldsymbol V(\boldsymbol{x},0)|^2){\rm
d}\bs x=0,
\]
we deduce $\mathcal E(t)=0$ for all $t\in[0,T]$. In particular, we have
\[
\int_\Omega |\boldsymbol E(\boldsymbol{x},t)|^2\,{\rm d}\bs x=\int_\Omega
|\partial_t\boldsymbol V(\boldsymbol{x},t)|^2\,{\rm d}\bs x\leq \mathcal
E(t)=0,\quad t\in[0,T].
\]
This proves that
\[
\boldsymbol E(\boldsymbol{x},t)=0, \quad |\boldsymbol x|>R,~ t\in (0,T),
\]
which implies that $(\nabla \times \boldsymbol E)\times \boldsymbol \nu = 0$ on
$\Gamma_R \times (0,T)$ and completes the proof.
\end{proof}

In the following lemma, we present a uniqueness result for recovering
$\tilde{\bs a}$ from the tangential trace of the electric field measured on
$\Gamma_R$. Our arguments are inspired by a recent uniqueness result
\cite{HuKian} to inverse source problems in elastodynamics. Compared to the
uniqueness result of Theorem \ref{ut}, the slow moving assumption of the source
is not required in the following Theorem \ref{up}.

\begin{theo}\label{up}
Assume that $\boldsymbol J \in \mathcal{A}$ and the non-vanishing function $h$
are both known. Then the function $\tilde{\boldsymbol a}(t), t \in [0, T_0]$
can be uniquely determined by the data set
$\{\boldsymbol{E}(\boldsymbol{x},t)\times \boldsymbol\nu: \boldsymbol
x\in \Gamma_R, t\in (0, T)\}$.
\end{theo}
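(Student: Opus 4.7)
The plan is to adapt the Fourier approach of Theorem~\ref{us} to the moving source setting, invoking Lemma~\ref{dtn} to supply a second boundary condition on $\Gamma_R$ in lieu of the capacity operator used before. Suppose two orbit functions $\tilde{\boldsymbol a}$ and $\tilde{\boldsymbol b}$ generate solutions $\boldsymbol E_1,\boldsymbol E_2$ with identical tangential traces on $\Gamma_R\times(0,T)$, and set $\boldsymbol E:=\boldsymbol E_1-\boldsymbol E_2$. The difference solves the homogeneous Maxwell system outside $B_R$ with zero tangential trace on $\Gamma_R$, so Lemma~\ref{dtn} also yields $(\nabla\times\boldsymbol E)\times\boldsymbol\nu=0$ on $\Gamma_R\times(0,T)$. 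Passing to the Fourier transform in time (legitimate by Lemma~\ref{2.1}), both $\hat{\boldsymbol E}\times\boldsymbol\nu$ and $(\nabla\times\hat{\boldsymbol E})\times\boldsymbol\nu$ vanish on $\Gamma_R$ for every $\kappa>0$.

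I would then follow the recipe of Theorem~\ref{us} and test the reduced time-harmonic Maxwell equation against the plane waves $\hat{\boldsymbol E}^{\mathrm{inc}}=\boldsymbol p\,e^{{\rm i}\kappa\boldsymbol d\cdot\boldsymbol x}$ and $\hat{\boldsymbol H}^{\mathrm{inc}}=\boldsymbol q\,e^{{\rm i}\kappa\boldsymbol d\cdot\boldsymbol x}$, with $\boldsymbol d\in S^2$, $\boldsymbol p\cdot\boldsymbol d=0$, $\boldsymbol q=\boldsymbol p\times\boldsymbol d$, and integrate by parts on $B_R$. Both boundary surface integrals vanish at once thanks to the pair of trace conditions above, bypassing the capacity operator. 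The tensor-product form $\boldsymbol F(\boldsymbol x,t)=\boldsymbol J(\tilde{\boldsymbol x}-\tilde{\boldsymbol a}(t))\,h(x_3)\,g(t)$ lets the spatial integral factor into a 2D Fourier transform of $\boldsymbol J$, the 1D Fourier transform $\hat h$, and a time integral depending on the orbit, producing
\begin{equation*}
\hat{\boldsymbol J}(\tilde{\boldsymbol\xi})\,\hat h(\xi_3)\int_0^{T_0}g(t)\,e^{-{\rm i}\tilde{\boldsymbol\xi}\cdot\tilde{\boldsymbol a}(t)-{\rm i}\kappa t}\,dt
=\hat{\boldsymbol J}(\tilde{\boldsymbol\xi})\,\hat h(\xi_3)\int_0^{T_0}g(t)\,e^{-{\rm i}\tilde{\boldsymbol\xi}\cdot\tilde{\boldsymbol b}(t)-{\rm i}\kappa t}\,dt
\end{equation*}
with $\boldsymbol\xi=-\kappa\boldsymbol d$, so that $(\tilde{\boldsymbol\xi},\xi_3,\kappa)$ lies on the light cone $|\tilde{\boldsymbol\xi}|^2+\xi_3^2=\kappa^2$. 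Since $\hat{\boldsymbol J}$, $\hat h$, and the time integrals are all entire functions of their arguments and neither $\hat{\boldsymbol J}$ nor $\hat h$ vanishes identically, the product structure together with analytic continuation first cancels the prefactors on an open set and then propagates the remaining identity off the light cone, producing
\begin{equation*}
\int_0^{T_0}g(t)\bigl(e^{-{\rm i}\tilde{\boldsymbol\xi}\cdot\tilde{\boldsymbol a}(t)}-e^{-{\rm i}\tilde{\boldsymbol\xi}\cdot\tilde{\boldsymbol b}(t)}\bigr)e^{-{\rm i}\kappa t}\,dt=0\quad\text{for all }(\tilde{\boldsymbol\xi},\kappa)\in\mathbb R^2\times\mathbb R.
\end{equation*}

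Multiplying back by $\hat h(\xi_3)$ and inverting the full space-time Fourier transform yields the pointwise identity $\boldsymbol J(\tilde{\boldsymbol x}-\tilde{\boldsymbol a}(t))\,h(x_3)\,g(t)=\boldsymbol J(\tilde{\boldsymbol x}-\tilde{\boldsymbol b}(t))\,h(x_3)\,g(t)$. Fix any $t_0\in(0,T_0)$ with $g(t_0)\neq 0$ and any $x_3$ with $h(x_3)\neq 0$, so that $\boldsymbol J(\tilde{\boldsymbol x}-\tilde{\boldsymbol a}(t_0))=\boldsymbol J(\tilde{\boldsymbol x}-\tilde{\boldsymbol b}(t_0))$ for every $\tilde{\boldsymbol x}\in\mathbb R^2$. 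At this point the admissibility $\boldsymbol J\in\mathcal A$ is decisive: one of the components $J_i$ attains its unique strict maximum at the origin, so both sides attain their unique maxima at $\tilde{\boldsymbol x}=\tilde{\boldsymbol a}(t_0)$ and $\tilde{\boldsymbol x}=\tilde{\boldsymbol b}(t_0)$ respectively, forcing $\tilde{\boldsymbol a}(t_0)=\tilde{\boldsymbol b}(t_0)$. Continuity of the orbit functions then extends the equality to all of $[0,T_0]$. The main technical hurdle I expect is the analytic continuation off the light cone, since the incident plane waves only parametrize the surface $|\boldsymbol\xi|=\kappa$; the separated product structure, combined with the non-triviality of the entire factors $\hat{\boldsymbol J}$ and $\hat h$, is what makes the extension feasible.
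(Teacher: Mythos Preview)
Your outline is correct, and the endgame (obtaining $\boldsymbol J(\tilde{\boldsymbol x}-\tilde{\boldsymbol a}(t))=\boldsymbol J(\tilde{\boldsymbol x}-\tilde{\boldsymbol b}(t))$ and invoking $\boldsymbol J\in\mathcal A$) matches the paper's. The route to that identity, however, is genuinely different.

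The paper does \emph{not} Fourier transform in time. It integrates directly in space-time over $B_R\times(0,T)$ against the evanescent test function
\[
\boldsymbol F(\boldsymbol x,t)=\tilde{\boldsymbol p}\,e^{-{\rm i}\kappa_1 t}\,e^{-{\rm i}\kappa_2\tilde{\boldsymbol d}\cdot\tilde{\boldsymbol x}}\,e^{\sqrt{\kappa_2^2-\kappa_1^2}\,x_3},
\]
with $\tilde{\boldsymbol p}\perp\tilde{\boldsymbol d}$ lying in the horizontal plane and a \emph{real} exponential in $x_3$. This choice still solves the homogeneous Maxwell system, but it decouples the temporal frequency $\kappa_1$ from the horizontal spatial frequency $\kappa_2$: the pair $(\kappa_1,\kappa_2)$ ranges over an open set $H\subset\mathbb R^2$ chosen so that the Laplace-type integral $\int e^{\sqrt{\kappa_2^2-\kappa_1^2}\,x_3}h(x_3)\,{\rm d}x_3$ is nonzero. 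After Lemma~\ref{dtn} kills both boundary terms, one lands immediately on $\hat{\boldsymbol f}(\kappa_2\tilde{\boldsymbol d},\kappa_1)=0$ on an open set of $\mathbb R^3$, and analyticity finishes. There is no light cone to leave.

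Your approach instead keeps the real propagating plane waves of Theorem~\ref{us}, which restricts $(\tilde{\boldsymbol\xi},\xi_3,\kappa)$ to the cone $|\tilde{\boldsymbol\xi}|^2+\xi_3^2=\kappa^2$. The step you flag as the main hurdle is exactly where the two proofs diverge, and it is worth making your projection argument explicit: the difference of time integrals depends only on $(\tilde{\boldsymbol\xi},\kappa)$, so on the cone your identity reads $\hat h\bigl(\pm\sqrt{\kappa^2-|\tilde{\boldsymbol\xi}|^2}\bigr)\,A(\tilde{\boldsymbol\xi},\kappa)=0$; since $\hat h$ is entire and not identically zero, its real zeros are discrete, forcing $A=0$ on an open dense subset of the solid cone $\{|\tilde{\boldsymbol\xi}|<\kappa\}$, hence everywhere by analyticity. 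This works, but it is precisely this manoeuvre that the paper's evanescent test function is designed to bypass. In effect the paper performs your analytic continuation in $\xi_3$ \emph{at the level of the test function}, replacing $e^{-{\rm i}\xi_3 x_3}$ by $e^{\lambda x_3}$ from the outset. What you gain is a proof that recycles the machinery of Theorem~\ref{us} verbatim; what the paper gains is a one-step argument that never sees the light-cone constraint and makes transparent why the separated $h(x_3)$ structure is essential.
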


\begin{proof}
Assume that there are two functions $\tilde{\boldsymbol a}$ and
$\tilde{\boldsymbol b}$ such that
\begin{align}\label{mep1}
\begin{cases}
\partial^2_{t}\boldsymbol E_1(\boldsymbol{x},t) +
\nabla\times(\nabla\times\boldsymbol E_1
 (\boldsymbol{x},t))= \boldsymbol J(\tilde{\bs x} - \tilde{\boldsymbol
a}(t))h(x_3)g(t), &\quad
\boldsymbol{x}\in \mathbb R^3,~ t>0,\\
\boldsymbol E_1(\boldsymbol{x},0) = \partial_t \boldsymbol E_1(\boldsymbol{x},0) =
0, &\quad \boldsymbol x \in \mathbb R^3,
\end{cases}
\end{align}
and
\begin{align}\label{mep2}
\begin{cases}
\partial^2_{t}\boldsymbol E_2(\boldsymbol{x},t) +
\nabla\times(\nabla\times\boldsymbol E_2
 (\boldsymbol{x},t))= \boldsymbol J(\tilde{\bs x} - \tilde{\boldsymbol
b}(t))h(x_3)g(t), &\quad
\boldsymbol{x}\in \mathbb R^3,~ t>0,\\
\boldsymbol E_2(\boldsymbol{x},0) = \partial_t \boldsymbol E_2(\boldsymbol{x},0) =
0, &\quad \boldsymbol x \in \mathbb R^3.
\end{cases}
\end{align}
It suffices to show that $\tilde{\boldsymbol a}(t) = \tilde{\boldsymbol b}(t)$
in $(0,T_0)$ if $\boldsymbol{E}_1(\boldsymbol{x},t)\times \boldsymbol\nu  =
\boldsymbol{E}_2(\boldsymbol{x},t)\times \boldsymbol\nu$ for $x\in \Gamma_R, t
\in (0, T)$.
Denote $\boldsymbol E = \boldsymbol E_1 - \boldsymbol E_2$ and
\begin{align*}
\boldsymbol f(\tilde{\bs x}, t) &= \boldsymbol J(\tilde{\bs x} -
\tilde{\boldsymbol a}(t))g(t) - \boldsymbol J(\tilde{\bs x} - \tilde{\boldsymbol
b}(t))g(t).
\end{align*}
Subtracting \eqref{mep1} from \eqref{mep2} yields
\begin{align}
\begin{cases}
\partial^2_{t}\boldsymbol E(\boldsymbol{x},t) +
\nabla\times(\nabla\times\boldsymbol E
 (\boldsymbol{x},t))= \boldsymbol f(\tilde{\bs x}, t)h(x_3)g(t), &\quad
\boldsymbol{x}\in \mathbb R^3,~ t>0,\\
\boldsymbol E(\boldsymbol{x},0) = \partial_t \boldsymbol E(\boldsymbol{x},0) =
0, &\quad \boldsymbol x \in \mathbb R^3.
\end{cases}
\end{align}
Since $h$ does not vanish identically, we can always find an interval $\Lambda=(a_-,a_+)\subset \mathbb R_+$ such that
\begin{equation}\label{h}
\int_{-\hat{R}\sqrt{2}/2}^{\hat{R}\sqrt{2}/2} e^{\lambda x_3}h(x_3){\rm
d}x_3\neq 0,\quad\forall \lambda\in \Lambda.
\end{equation}
Set $H:=\{(x_1, x_2): a_-^2< x_2^2-x_1^2< a_+^2, x_1>0, x_2>0\}$, which is an open set in $\mathbb R^2$.
We choose a test function $\boldsymbol F(\boldsymbol x, t)$ of the form
\[
\boldsymbol F(\boldsymbol x, t) = \tilde{\bs p}e^{-{\rm i}\kappa_1 t}e^{-{\rm
i}\kappa_2 \tilde{\bs d}\cdot \tilde{\bs x}}e^{\sqrt{\kappa_2^2 -
\kappa_1^2}x_3},
\]
where $\tilde{\bs d}= (d_1, d_2,0)$ is a unit vector, $\tilde{\bs p}=(p_1, p_2,
0)$ is a unit vector orthogonal to $\tilde{\bs d}$, and $\kappa_1, \kappa_2$
are positive constants such that $(\kappa_1, \kappa_2)\in H$. It is easy to
verify that
\begin{align}\label{F}
\partial^2_{t}\boldsymbol F(\boldsymbol{x},t) +
\nabla\times(\nabla\times\boldsymbol F
 (\boldsymbol{x},t)) = 0.
\end{align}
Since $\boldsymbol E(\boldsymbol x, t) \times \boldsymbol \nu = 0$ on
$\Gamma_R$, from Lemma \ref{dtn}, we also have $(\nabla \times \boldsymbol
E(\boldsymbol x, t))\times \boldsymbol \nu = 0$ on $\Gamma_R$. Consequently,
multiplying both sides of the Maxwell system by $\boldsymbol F$ and using
integration by parts over $[0, T] \times B_R,$ we can obtain from \eqref{F}
that
\begin{align*}
& \int_0^{T}\int_{B_R}\boldsymbol f(\tilde{\bs x},t)h(x_3)\cdot \boldsymbol
F(\boldsymbol x, t){\rm d}{\boldsymbol x}{\rm d}t\\
&=\int_0^{T}\int_{B_R} \Big(\partial^2_{t}\boldsymbol E(\boldsymbol{x},t) +
\nabla\times(\nabla\times\boldsymbol E (\boldsymbol{x},t))\Big) \cdot
\boldsymbol F(\boldsymbol x, t){\rm d}{\boldsymbol x}{\rm d}t\\
&=\int_0^{T}\int_{\Gamma_R} \boldsymbol \nu \times (\nabla \times \boldsymbol
E(\boldsymbol x, t)) \cdot \boldsymbol F(\boldsymbol x, t) - \boldsymbol \nu
\times (\nabla \times \boldsymbol F(\boldsymbol x, t)) \cdot \boldsymbol
E(\boldsymbol x, t){\rm d}s{\rm d}t\\
&=\int_0^{T}\int_{\Gamma_R} \boldsymbol \nu \times (\nabla \times \boldsymbol
E(\boldsymbol x, t)) \cdot \boldsymbol F(\boldsymbol x, t) - (\boldsymbol
E(\boldsymbol x, t) \times \boldsymbol \nu) \cdot (\nabla \times \boldsymbol
F(\boldsymbol x, t)) {\rm d}s{\rm d}t\\
& = 0.
\end{align*}
Note that in the last step we have used Lemma \ref{dtn}. Recalling the
definition of $\bs F$ and $\bs f$,
we obtain from the previous identity that
\begin{align*}
\Big(\int_{-\hat{R}\sqrt{2}/2}^{\hat{R}\sqrt{2}/2} e^{\sqrt{\kappa_2^2 -
\kappa_1^2}x_3}h(x_3){\rm d}x_3\Big)~ \tilde{\bs p} \cdot
\int_0^{T}\int_{B_{\hat{R}}}\boldsymbol f(\tilde{\bs x}, t)e^{-{\rm i}\kappa_1
t}e^{-{\rm i}\kappa_2 \tilde{\bs d}\cdot \tilde{\bs x}}{\rm d}\tilde{\bs x}{\rm
d}t = 0.
\end{align*}
In view of (\ref{h}) and the choice of $\kappa_1, \kappa_2$, we get
\begin{align*}
\tilde{\bs p} \cdot \int_0^{T}\int_{B_{\hat{R}}}\boldsymbol f(\tilde{\bs x},
t)e^{-{\rm i}\kappa_1 t}e^{-{\rm i}\kappa_2 \tilde{\bs d}\cdot \tilde{\bs
x}}{\rm d}\tilde{\bs x}{\rm d}t =0.
\end{align*}
For a vector $\boldsymbol v(\tilde{\bs x}, t) \in \mathbb R^3$, denote by
$\hat{\boldsymbol v}(\boldsymbol \xi), \boldsymbol \xi \in \mathbb R^3$ the
Fourier transform of $\boldsymbol v$ with respect to the variable $(\tilde{\bs
x}, t)$, i.e.,
\[
\hat{\boldsymbol v}(\boldsymbol \xi) = \int_{\mathbb R^3}\boldsymbol
v(\tilde{\bs x}, t) e^{-{\rm i}\boldsymbol \xi \cdot (\tilde{\bs x}, t)}{\rm
d}\tilde{\bs x}{\rm d}t.
\]
Consequently, it holds that
\begin{align*}
\tilde{\bs p} \cdot \hat{\boldsymbol f}( \kappa_2\tilde{\bs d},\kappa_1) = 0
\end{align*}
for all $\kappa_2 > \kappa_1 >0$ and $|\tilde{\boldsymbol d}| = 1.$

On the other hand, since $\nabla_{\tilde{\bs x}}\cdot \boldsymbol J = 0,$ we have $\nabla_{\tilde{\bs x}}\cdot \bs f = 0$. Hence,
\begin{align*}
&\quad\tilde{\bs d}\cdot \int_0^{T}\int_{B_{\hat{R}}}\bs f(\tilde{\bs x}, t)e^{-{\rm i}\kappa_1 t}e^{-{\rm i}\kappa_2 \tilde{\bs d}\cdot \tilde{\bs x}}{\rm d}\tilde{\bs x}{\rm d}t\\
&= -\frac{1}{{\rm i}\kappa_2}\int_0^{T}\int_{B_{\hat{R}}}\bs f(\tilde{\bs x}, t) \cdot \nabla_{\tilde{\bs x}} e^{-{\rm i}\kappa_2 \tilde{\bs d}\cdot \tilde{\bs x}}{\rm d}\tilde{\bs x}{\rm d}t\\
& = \frac{1}{{\rm i}\kappa_2}\int_0^{T}\int_{B_{\hat{R}}} \nabla_{\tilde{\bs x}}\cdot \bs f(\tilde{\bs x}, t)e^{-{\rm i}\kappa_2 \tilde{\bs d}\cdot \tilde{\bs x}}{\rm d}\tilde{\bs x}{\rm d}t\\
&=0,
\end{align*}
which means $\tilde{\bs d} \cdot \hat{\boldsymbol f}(\kappa_2 \tilde{\bs d}, \kappa_1) = 0$ for all $(\kappa_1,\kappa_2)\in H$ and $|\tilde{\bs d}| = 1.$
Since both $\tilde{\bs d}$ and $\tilde{\bs p}$ are orthonormal vectors in $\mathbb R^2$, they form an orthonormal basis in $\mathbb R^2$. Therefore we have
\[
\hat{\boldsymbol f}(\kappa_2\tilde{\bs d},\kappa_1) = \tilde{\bs d} \cdot \hat{\boldsymbol f}(\kappa_2\tilde{\bs d},\kappa_1) \tilde{\bs d} + \tilde{\bs p} \cdot \hat{\boldsymbol f}(\kappa_2\tilde{\bs d},\kappa_1) \tilde{\bs p}= 0
\]
for all $(\kappa_1,\kappa_2)\in H$ and $|\tilde{\bs d}| = 1.$ Since $\hat{\boldsymbol f}$ is analytic in $\mathbb R^3$ and $\{(\kappa_1, \kappa_2\tilde{\bs d}): (\kappa_1,\kappa_2)\in H , ~|\tilde{\bs d}|=1\}$ is an open set in $\mathbb R^3$, we have $\hat{\boldsymbol f}(\boldsymbol \xi) = 0$ for all $\boldsymbol \xi \in \mathbb R^3$, which means $\boldsymbol f(\tilde{\bs x}, t) \equiv 0$ and then
\begin{align*}
\boldsymbol J(\tilde{\bs x} - \tilde{\boldsymbol a}(t))g(t) = \boldsymbol J(\tilde{\bs x} - \tilde{\boldsymbol b}(t))g(t)
\end{align*}
for all $\tilde{\bs x} \in \mathbb R^2$ and $t>0.$ This particulary gives
\begin{align}\label{idetityp}
\boldsymbol J(\tilde{\bs x} - \tilde{\boldsymbol a}(t))= \boldsymbol J(\tilde{\bs x} - \tilde{\boldsymbol b}(t))\quad\mbox{for all}\quad t\in (0, T_0),\;\; \tilde{\bs x}=(x_1, x_2, 0).
\end{align}
Assume that there exists one time point $t_0 \in (0, T_0)$ such that $\tilde{\boldsymbol a}(t_0) \neq \tilde{\boldsymbol b}(t_0)$. By choosing $\tilde{\bs x} = \tilde{\boldsymbol a}(t_0)$ we deduce from \eqref{idetityp} that
\[
\boldsymbol J(0) = \boldsymbol J(\tilde{\boldsymbol a}(t_0) - \tilde{\boldsymbol b}(t_0)),
\]
which is a contradiction to our assumption that $\bs J\in \mathcal{A}$. This finishes the proof of $\tilde{\bs a}(t)=\tilde{\bs b}(t)$ for $t\in[0,T_0]$.
\end{proof}

\begin{rema}
The proof of Theorem \ref{up} does not depend on the Fourier transform of the
electromagnetic field in time, but it requires the data measured on the whole
surface $\Gamma_R$. However, the Fourier approach presented in the proof of
Theorems \ref{us} and \ref{ut} straightforwardly carries over to the proof of
Theorem \ref{up} without any additional difficulties. Particulary, the result of
Theorem \ref{up} remain valid with the partial data
$\{\boldsymbol{E}(\boldsymbol{x},t)\times \boldsymbol\nu: \boldsymbol
x\in \Gamma\subset \Gamma_R, t\in (0, T)\}$.
\end{rema}
\begin{rema} In the case of the scalar wave equation,
\begin{align*}
\begin{cases}
\partial^2_{t}u(\boldsymbol{x},t) +
\nabla\times(\nabla\times u
 (\boldsymbol{x},t))=  J(\tilde{\bs x}-\tilde{\boldsymbol a}(t))\,h(x_3)\,g(t),&\quad
\boldsymbol{x}\in \mathbb R^3,~ t>0,\\
u(\boldsymbol{x},0) = \partial_t u(\boldsymbol{x},0) =
0, &\quad \boldsymbol x \in \mathbb R^3,
\end{cases}
\end{align*}
where $J: \mathbb R^2\rightarrow \mathbb R_+$ is a scalar function compactly
supported on $\{(x_1,x_2)\in \mathbb R^2: x_1^2+x_2^2<\hat{R}^2 \}$. Then,
following the same arguments as in the proof of Theorem \ref{up}, one can prove
that $\tilde{\boldsymbol a}(t), t \in [0, T_0]$ can be uniquely determined by
the data set $\{\boldsymbol{E}(\boldsymbol{x},t)\times \boldsymbol\nu:
\boldsymbol x\in \Gamma\subset\Gamma_R, t\in (0, T)\}$.
\end{rema}

\section{Inverse moving source problem for a delta distribution}\label{sec:5}

As seen in the previous sections, when the temporal function $g$ is supported on
$[0, T_0]$, it is possible to recover the moving orbit function $\bs a(t)$ for
$t\in[0, T_0]$. In this section we consider the case where the temporal
function shrinks to the Dirac distribution $g(t)=\delta(t-t_0)$ with some
unknown time point $t_0>0$. Our aim is to determine $t_0$ and $\bs a(t_0)$ from
the electric data at a finite number of measurement points.

Consider the following initial value problem of the time-dependent Maxwell
equation
\begin{align}\label{Max5}
\begin{cases}
\partial^2_{t}\boldsymbol E(\boldsymbol{x},t) +
\nabla\times(\nabla\times\boldsymbol E
 (\boldsymbol{x},t))=  -\boldsymbol J(x-\boldsymbol a(t))\delta(t-t_0),&\quad
\boldsymbol{x}\in \mathbb R^3,~ t>0,\\
\boldsymbol E(\boldsymbol{x},0) = \partial_t \boldsymbol E(\boldsymbol{x},0) =
0, &\quad \boldsymbol x \in \mathbb R^3.
\end{cases}
\end{align}

Since $\nabla \cdot \boldsymbol J=0$, the electric field $\boldsymbol E(\boldsymbol x)$ in this case can be expressed as
\begin{align}\label{E}
\boldsymbol E (\boldsymbol x, t) &= \int_0^{\infty} \int_{\mathbb R^3} {\mathbb
G} ({\boldsymbol x - \boldsymbol y}, t - s) \boldsymbol J(\boldsymbol y - \boldsymbol a(s))
\delta(s-t_0){\rm d}\boldsymbol y {\rm d}s\notag\\
& = \int_0^{\infty} \int_{\mathbb R^3} \frac{1}{4\pi |\boldsymbol x -
\boldsymbol y|} \delta(|\boldsymbol x - \boldsymbol y| - (t -
s)) \boldsymbol J(\boldsymbol y - \boldsymbol a(s)) \delta(s-t_0){\rm d}\boldsymbol y {\rm d}s\notag\\
&\qquad -  \int_0^{\infty} \int_{\mathbb R^3} \nabla_{\boldsymbol x}
\nabla^\top_{\boldsymbol x} \Big( \frac{1}{4\pi |\boldsymbol x - \boldsymbol y|}
H(|\boldsymbol x - \boldsymbol y|+s-t) \Big) \boldsymbol
J(\boldsymbol y - \boldsymbol a(s)) \delta(s-t_0){\rm d}\boldsymbol y {\rm d}s\notag\\
& = \int_0^{\infty} \int_{\mathbb R^3} \frac{1}{4\pi |\boldsymbol x -
\boldsymbol y|} \delta(|\boldsymbol x - \boldsymbol y| - (t -
s)) \boldsymbol J(\boldsymbol y - \boldsymbol a(s)) \delta(s-t_0){\rm d}\boldsymbol y {\rm d}s\notag\\
&\qquad -  \int_0^{\infty} \int_{\mathbb R^3} \nabla_{\boldsymbol y}
\nabla^\top_{\boldsymbol y} \Big( \frac{1}{4\pi |\boldsymbol x - \boldsymbol y|}
H(|\boldsymbol x - \boldsymbol y|+s-t) \Big) \boldsymbol
J(\boldsymbol y - \boldsymbol a(s)) \delta(s-t_0){\rm d}\boldsymbol y {\rm d}s\notag\\
& = \int_{\mathbb R^3} \frac{1}{4\pi |\boldsymbol x - \boldsymbol y|}
\delta(|\boldsymbol x - \boldsymbol y| - (t - t_0)) \boldsymbol J(\boldsymbol y
- \boldsymbol a(t_0)) {\rm d}\boldsymbol y.
\end{align}

Before stating the main theorem of this section, we describe the strategy for
the choice of four measurement points (or receivers) on the sphere
$\Gamma_R$. The geometry is shown in Figure \ref{pg}. First, we choose
arbitrarily three different points $\boldsymbol x_1, \boldsymbol x_2,
\boldsymbol x_3 \in \Gamma_R$. Denote by $P$ the uniquely determined plane
passing through $\boldsymbol x_1$, $\boldsymbol x_2$ and $\boldsymbol x_3$, and
by $L$ the line passing through the origin and perpendicular to $P$. Obviously
the straight line $L$ has two intersection points with $\Gamma_R$. Choose one of
the intersection points with the longer distance to plane $P$ as the fourth
point $\boldsymbol x_4$. If the two intersection points have the same distance
to $P$, we can choose either one of them as $\boldsymbol x_4$. By our choice of
$\bs x_j, j=1,2,3,4$, they cannot lie on one side of any plane passing through
the origin, if the plane $P$ determined by $\bs x_j, j=1,2,3$ does not pass
through the origin.

\begin{figure}
\centering
\includegraphics[width=0.3\textwidth]{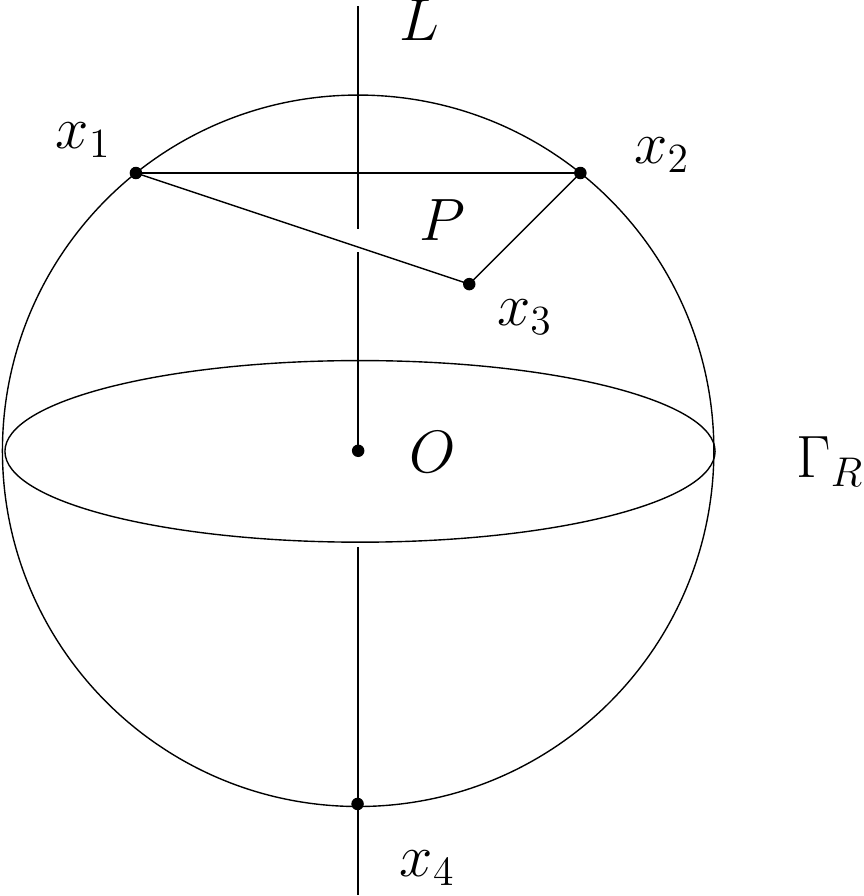}
\caption{Geometry of the four measurement points.}
\label{pg}
\end{figure}

\begin{theo}
Let the measurement positions $\boldsymbol x_j\in \Gamma_R$, $j=1,\cdots,4$ be
given as above and let $\bs J$ be specified as in the introduction part.
We assume additionally that ${\rm supp}(\boldsymbol J)= B_{\hat{R}}$ and there
exists a small constant $\delta>0$ such that $|J_i(\boldsymbol x)|>0$ for all $
\hat{R} - \delta \leq |\boldsymbol x| \leq \hat{R}$ and $i=1, 2, 3$. Then both
$t_0$ and $\boldsymbol a(t_0)$ can be uniquely determined by the data set
$\{\boldsymbol{E}(\boldsymbol x_j,t): j =1,\cdots,4,\, t\in (0, T)\}$,
where $T = t_0 + \hat{R}+R_1 + R$.
\end{theo}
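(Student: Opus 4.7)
The plan is to extract arrival-time data $\tau_j := t_0 + |\boldsymbol x_j - \boldsymbol a(t_0)|$ from each measurement, then to solve an algebraic system in $(t_0, \boldsymbol a(t_0))$, and finally to use the designed geometric configuration of the four receivers to rule out any spurious second solution. To carry out the first step, I would use the explicit formula \eqref{E}: $\boldsymbol E(\boldsymbol x_j,t)$ equals a surface integral of $\boldsymbol J(\cdot - \boldsymbol a(t_0))$ over the sphere of radius $t-t_0$ centered at $\boldsymbol x_j$. Writing $r_j := |\boldsymbol x_j - \boldsymbol a(t_0)|$, which exceeds $\hat R$ because $R > R_1 + \hat R$ and $|\boldsymbol a(t_0)| < R_1$, the integrand is supported precisely for $t - t_0 \in [r_j - \hat R,\, r_j + \hat R]$. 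The non-vanishing hypothesis $|J_i|>0$ on the shell $\hat R - \delta \leq |\boldsymbol z| \leq \hat R$ then guarantees that near either endpoint of this interval, the sphere of integration meets $\mathrm{supp}\,\boldsymbol J(\cdot - \boldsymbol a(t_0))$ in a small cap on which $\boldsymbol J$ does not vanish; the leading-order contribution of each component of the cap integral cannot cancel, so that $\boldsymbol E(\boldsymbol x_j,\cdot)$ is genuinely non-zero there. Hence the first and last times at which $\boldsymbol E(\boldsymbol x_j,\cdot)$ fails to vanish are $t_j^\pm = t_0 + r_j \pm \hat R$, whence $\tau_j = \tfrac12(t_j^- + t_j^+)$ is directly readable from the data for each $j$.

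Next, the four unknowns satisfy the quadratic equations $|\boldsymbol x_j - \boldsymbol a(t_0)|^2 = (\tau_j - t_0)^2$; since $|\boldsymbol x_j|^2 = R^2$ for all $j$, subtracting the $j=1$ equation from the $j=k$ equation eliminates the quadratic terms and yields the linear $3\times 4$ system
\[
(\boldsymbol x_k - \boldsymbol x_1)\cdot \boldsymbol a(t_0) - (\tau_k - \tau_1)\, t_0 = \tfrac12 (\tau_1^2 - \tau_k^2),\qquad k=2,3,4.
\]
By the construction of $\boldsymbol x_4$, the four receivers are not coplanar, so the three vectors $\boldsymbol x_k - \boldsymbol x_1$ ($k=2,3,4$) are linearly independent in $\mathbb R^3$. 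Therefore this system has rank three and its solution set is a single affine line in $\mathbb R^4$; substituting the parametrization back into the $j=1$ quadratic equation reduces the problem to a scalar quadratic, leaving at most two candidate pairs $(t_0,\boldsymbol a(t_0))$.

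Finally, I would rule out a spurious second candidate. If $(\boldsymbol a_1,t_1)\neq(\boldsymbol a_2,t_2)$ both solved the system, then setting $\boldsymbol\alpha = \boldsymbol a_1 - \boldsymbol a_2$, $\beta = t_1 - t_2$, and $\gamma = \tfrac12 (|\boldsymbol a_1|^2 - |\boldsymbol a_2|^2 - t_1^2 + t_2^2)$, subtraction of the two systems gives $\boldsymbol x_j \cdot \boldsymbol\alpha - \tau_j\beta = \gamma$ for all $j$. If $\beta = 0$, non-coplanarity of the $\boldsymbol x_j$ forces $\boldsymbol\alpha = 0$, and the defining relation then forces $\gamma = 0$ and $t_1 = t_2$, a contradiction. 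If $\beta \neq 0$, substituting $\tau_j = t_i + |\boldsymbol x_j - \boldsymbol a_i|$ shows that the four $\boldsymbol x_j$ all lie on one sheet of the hyperboloid $\{|\boldsymbol x - \boldsymbol a_1| - |\boldsymbol x - \boldsymbol a_2| = -\beta\}$, which is contained in an open half-space. The main obstacle of the proof will be to combine this half-space inclusion with the a priori bounds $\boldsymbol a_i \in B_{R_1}$, $R > R_1 + \hat R$ and the generic position $0 \notin P$ in order to produce a plane through the origin with all four $\boldsymbol x_j$ strictly on one side — thereby contradicting the property built into the construction of $\boldsymbol x_4$ and forcing the two candidate solutions to coincide.
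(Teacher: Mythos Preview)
Your proposal is correct and lands on the same endgame as the paper, but the route to the key identity is organized a bit differently. The paper takes the temporal Fourier transform of \eqref{E}, introduces the spherical–mean functions $\boldsymbol F_a(\rho)$, $\boldsymbol F_b(\rho)$, and uses the shift property of the Fourier transform to deduce $\boldsymbol F_a(\rho)=\boldsymbol F_b(\rho-(t_0-\tilde t_0))$; comparing the infimum and supremum of the supports (which the non-vanishing hypothesis on $\boldsymbol J$ pins down exactly) then yields the single relation
\[
|\boldsymbol x_j-\boldsymbol b(\tilde t_0)|-|\boldsymbol x_j-\boldsymbol a(t_0)|=\tilde t_0-t_0,\qquad j=1,\dots,4.
\]
You obtain the same relation without Fourier, by reading off the first and last non-vanishing times of $\boldsymbol E(\boldsymbol x_j,\cdot)$ directly from the spherical-mean representation \eqref{E}; this is a slightly more elementary extraction of exactly the same information. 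Your subsequent TDOA-style linear-plus-quadratic reduction is a constructive reformulation of the paper's ``assume two solutions and compare'' argument, and the non-coplanarity you invoke is precisely what the receiver construction guarantees.

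Both proofs finish with the same geometric step: if two distinct candidates survived, the four receivers would lie on one sheet of a hyperboloid of two sheets with foci in $B_{R_1}$, which the paper rules out in a single sentence by asserting that such a sheet meets $\Gamma_R$ only in an open hemisphere, contradicting the designed placement of $\boldsymbol x_4$. The ``main obstacle'' you flag is thus exactly the point the paper treats tersely; your plan to combine the half-space containment with $\boldsymbol a,\boldsymbol b\in B_{R_1}$ and the choice of $\boldsymbol x_4$ is the right idea, and the paper does not supply more detail there either.
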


\begin{proof}
Analogously to Lemma \ref{2.1}, one can prove that $\boldsymbol E (\boldsymbol
x, t) = 0$ for all $\boldsymbol x\in B_R$ and $t>T$. Taking the Fourier
transform of $\boldsymbol E(\boldsymbol x, t)$ in \eqref{fourier} with respect
to $t$ and making use of the representation of $\boldsymbol E$ in \eqref{E},
we obtain
\begin{align}\label{FF}
\hat{\boldsymbol E}(\boldsymbol x,\kappa) &= \int_{\mathbb R^3} \frac{e^{{\rm
i}\kappa(t_0 + |\boldsymbol x- \boldsymbol y|)}}{|\boldsymbol x-\boldsymbol
y|}\boldsymbol J(\boldsymbol y - \boldsymbol a(t_0)){\rm d}\boldsymbol y\notag\\
& = e^{{\rm i}\kappa t_0}\int_0^\infty e^{{\rm i}\kappa \rho} \frac{1}{\rho}
\int_{\Gamma_{\rho}(\boldsymbol x)}\boldsymbol J(\boldsymbol y - \boldsymbol
a(t_0)){\rm d}\boldsymbol y{\rm d}\rho,
\end{align}
where $\Gamma_{\rho}(\bs x):= \{\boldsymbol y\in \mathbb R^3:  |\bs
y-\boldsymbol x| = \rho\}$. Assume that there are two orbit functions
$\boldsymbol a$ and $\boldsymbol b$ and two time points $t_0$ and $\tilde{t}_0$
such that
\begin{align*}
\begin{cases}
\partial^2_{t}\boldsymbol E_1(\boldsymbol{x},t) +
\nabla\times(\nabla\times\boldsymbol E_1
 (\boldsymbol{x},t))=  -\boldsymbol J(x-\boldsymbol a(t))\delta(t-t_0),&\quad
\boldsymbol{x}\in \mathbb R^3,~ t>0,\\
\boldsymbol E_1(\boldsymbol{x},0) = \partial_t \boldsymbol E_1(\boldsymbol{x},0) =
0, &\quad \boldsymbol x \in \mathbb R^3,
\end{cases}
\end{align*}
and
\begin{align*}
\begin{cases}
\partial^2_{t}\boldsymbol E_2(\boldsymbol{x},t) +
\nabla\times(\nabla\times\boldsymbol E_2
 (\boldsymbol{x},t))=  -\boldsymbol J(x-\boldsymbol b(t))\delta(t-\tilde{t}_0),&\quad
\boldsymbol{x}\in \mathbb R^3,~ t>0,\\
\boldsymbol E_2(\boldsymbol{x},0) = \partial_t \boldsymbol E_2(\boldsymbol{x},0) =
0, &\quad \boldsymbol x \in \mathbb R^3.
\end{cases}
\end{align*}
We need to prove $t_0=\tilde{t}_0$ and $\bs a(t_0)=\bs b(\tilde{t}_0)$ under the condition
$\boldsymbol E_1(\boldsymbol x_j, t) = \boldsymbol E_2(\boldsymbol x_j, t)$ for $t \in [0, T]$ and $j=1,2,3,4$. Below we denote by $\bs x\in \Gamma_R$ one of the measurement points $\bs x_j$ ($j=1,\cdots,4$).
Introduce the functions $\bs F, \bs F_a, \bs F_b$: $\mathbb R_+\rightarrow
\mathbb R$ as follows:
\begin{align*}
\boldsymbol F(\rho) &= \frac{1}{\rho} \int_{\Gamma_{\rho}(\boldsymbol x)}\boldsymbol J(\boldsymbol y){\rm d}\boldsymbol y,\\
\boldsymbol F_a(\rho) &= \frac{1}{\rho} \int_{\Gamma_{\rho}(\boldsymbol x)}\boldsymbol J(\boldsymbol y - \boldsymbol a(t_0)){\rm d}\boldsymbol y,\\
\boldsymbol F_b(\rho) &= \frac{1}{\rho} \int_{\Gamma_{\rho}(\boldsymbol
x)}\boldsymbol J(\boldsymbol y - \boldsymbol b(\tilde{t}_0)){\rm d}\boldsymbol
y.
\end{align*}
Since $\text{supp}(\boldsymbol J)=B_{\hat{R}}$ and by our assumption, each
component  $J_j(\boldsymbol x)$ ($j=1,2,3$) is either positive or negative in a
small neighborhood of $\Gamma_{\hat{R}}$, we can obtain that
\begin{align}\label{supp}
&\inf\{\rho\in\text{supp} (\boldsymbol F)\}=|\boldsymbol x|-\hat{R},\qquad \qquad \quad \sup\{\rho\in\text{supp} (\boldsymbol F)\}= |\boldsymbol x|+\hat{R},\notag\\
&\inf\{\rho\in\text{supp} (\boldsymbol F_a)\}= |\boldsymbol x - \boldsymbol a(t_0)|-\hat{R},\quad
\sup\{\rho\in\text{supp} (\boldsymbol F_a)\}= |\boldsymbol x-\boldsymbol a(t_0)|+\hat{R},\notag\\
&\inf\{\rho\in\text{supp} (\boldsymbol F_b)\}= |\boldsymbol x-\boldsymbol
b(\tilde{t}_0)|-\hat{R},\quad \sup\{\rho\in\text{supp} (\boldsymbol
F_b)\}=|\boldsymbol x-\boldsymbol b(\tilde{t}_0)|+\hat{R}.
\end{align}
Since $\boldsymbol E_1(\boldsymbol x, t) = \boldsymbol E_2(\boldsymbol x, t), \,
t \in [0, T]$ for some point $\boldsymbol x \in \partial B_R$, from \eqref{FF}
we have
\[
e^{{\rm i}\kappa t_0} \hat{\boldsymbol F_a}(\kappa) = e^{{\rm i}\kappa
\tilde{t}_0} \hat{\boldsymbol F_b}(\kappa)
\]
for all $\kappa>0$, which means
\begin{align}\label{eq:Fourier}
\hat{\boldsymbol F_a}(\kappa) = e^{{\rm -i}\kappa (t_0 -
\tilde{t}_0)}\hat{\boldsymbol F_b}(\kappa).
\end{align}
Recalling the property of the Fourier transform,
\[
\widehat{\boldsymbol F_b(\rho- (t_0 - \tilde{t}_0))}(\kappa) = e^{{\rm -i}\kappa
(t_0 - \tilde{t}_0)} \hat{\boldsymbol F_b}(\kappa),
\]
we deduce from (\ref{eq:Fourier}) that
\[
\boldsymbol F_b(\rho - (t_0 - \tilde{t}_0)) = \boldsymbol F_a(\rho),\quad
\rho\in \mathbb R^+.
\]
Particularly,
\begin{align*}
\inf\{\text{supp} (\boldsymbol F_b(\cdot- (t_0 -
\tilde{t}_0)))\}=\inf\{\text{supp} (\boldsymbol F_a(\cdot))\},\\
\sup\{\text{supp} (\boldsymbol F_b(\cdot- (t_0 -
\tilde{t}_0)))\}=\sup\{\text{supp} (\boldsymbol F_a(\cdot))\}.
\end{align*}
Therefore, we derive from \eqref{supp} that
\begin{align*}
|\boldsymbol x-\boldsymbol b(\tilde{t}_0)| - \hat{R} + (t_0 - \tilde{t}_0)=
|\boldsymbol x-\boldsymbol a(t_0)|-\hat{R},\\
|\boldsymbol x-\boldsymbol b(\tilde{t}_0)| + \hat{R} + (t_0 - \tilde{t}_0) =
|\boldsymbol x-\boldsymbol a(t_0)|+\hat{R},
\end{align*}
which means
\begin{align}\label{identity}
|\boldsymbol x - \boldsymbol b(\tilde{t}_0)| - |\boldsymbol x - \boldsymbol
a(t_0)| =\tilde{t}_0- t_0.
\end{align}
Physically, the right and left hand sides of the above identity represent the
difference of the flight time between $\bs x$ and $\bs a(t_0)$, $\bs
b(\tilde{t}_0)$. Note that the wave speed has been normalized to one for
simplicity.

Finally, we prove that the identity (\ref{identity}) cannot hold simultaneously
for our choice of measurement points $\bs x_j\in\Gamma_R$ ($j=1,\cdots,4$).
Obviously, the set $\{\boldsymbol x \in \mathbb R^3: ~|\boldsymbol x -
\boldsymbol b(\tilde{t}_0)| - |\boldsymbol x - \boldsymbol a(t_0)| = t_0 -
\tilde{t}_0\}$ represents one sheet of a hyperboloid. This implies that $\bs
x_j$ ($j=1,2,3,4$) should be located on one half sphere of radius $R$ excluding
the corresponding equator, which is a contradiction to our choice of $\bs x_j$.
Then we have $t_0 = \tilde{t}_0$ and \eqref{identity} then becomes
\begin{align*}
|\boldsymbol x - \boldsymbol b(t_0)| - |\boldsymbol x - \boldsymbol a(t_0)| = 0.
\end{align*}
This implies that $\boldsymbol x_1, \boldsymbol x_2, \boldsymbol x_3,
\boldsymbol x_4$ should be on the same plane. This is also a contradiction to
our choice of $\boldsymbol x_i, \, i=1, \cdots, 4$. Then we have $\boldsymbol
a(t_0) = \boldsymbol b(t_0)$.
\end{proof}

\begin{rema} If the source term on the right hand side of (\ref{Max5}) takes the form
\[
\bs F(\bs x,t)=-\bs J(x-\bs a(t))\,\sum_{j=1}^m \delta(t-t_j),
\]
with the impulsive time points
\[
t_1<t_2<\cdots<t_m,\quad |t_{j+1}-t_j|>R.
\]
One can prove that the set $\{(t_j, \bs a(t_j)): j=1,2,\cdots,m\}$ can be uniquely determined by
$\{\boldsymbol{E}(\boldsymbol{x_j},t): j =1,\cdots,4,\, t\in (0, T)\}$,
where $T = t_m + \hat{R}+R_1 + R$. In fact, for $2\leq j\leq m$, one can prove that $(t_j, \bs a(t_j))$ can be uniquely determined by $\{\boldsymbol{E}(\boldsymbol{x_j},t): j =1,\cdots,4,\, t\in (T_{j-1}, T_j)\}$,
where $T_j=T_{j-1}+t_j$ and $T_1:=t_1 + \hat{R}+R_1 + R$.
\end{rema}

\section{Acknowledgement}
 The work of G. Hu is supported by the NSFC grant (No. 11671028) and NSAF grant (No. U1530401).  The work of Y. Kian is supported by  the French National
Research Agency ANR (project MultiOnde) grant ANR-17-CE40-0029.

\end{document}